\newcommand{\ket}[1]{| #1 \rangle} 
\newcommand{\bra}[1]{\langle #1 |} 
\def\zerobf{\textbf{0}}
\def \d {\mathrm{d}}
\newcommand{\D}{\mathrm{d}}
\newcommand{\Bb}{{\boldsymbol{b}}}
\newcommand{\Bh}{{\boldsymbol{h}}}
\newcommand{\Br}{{\boldsymbol{r}}}
\newcommand{\Bu}{{\boldsymbol{u}}}
\newcommand{\Bv}{{\boldsymbol{v}}}
\newcommand{\Bw}{{\boldsymbol{w}}}
\newcommand{\Bx}{{\boldsymbol{x}}}
\newcommand{\bbC}{\mathbb{C}}
\newcommand{\bbR}{\mathbb{R}}
\newcommand{\bbZ}{\mathbb{Z}}
\newcommand{\Ci}{\mathcal{I}}
\newcommand{\Cp}{\mathcal{P}}
\newcommand{\Ct}{\mathcal{T}}
\newcommand{\Cu}{\mathcal{U}}
\newcommand{\BB}{{\boldsymbol{B}}}
\newcommand{\BE}{{\boldsymbol{E}}}
\newcommand{\BH}{{\boldsymbol{H}}}
\newcommand{\BJ}{{\boldsymbol{J}}}
\newcommand{\BL}{{\boldsymbol{L}}}
\crefname{hypothesis}{Hypothesis}{Hypotheses}
\title{On Schr\"odingerization based quantum algorithms for linear dynamical systems with inhomogeneous terms}
\author{ Shi Jin \thanks{ School of Mathematical Sciences, Shanghai Jiao Tong University, 
 Institute of Natural Sciences, Shanghai Jiao Tong University, 
 Ministry of Education, Key Laboratory in Scientific and Engineering Computing, Shanghai Jiao Tong University, Shanghai 200240, China.
(\email{shijin-m@sjtu.edu.cn})}
 \and Nana Liu \thanks{Institute of Natural Sciences, Shanghai Jiao Tong University, Ministry of Education, Key Laboratory in Scientific and Engineering Computing, Shanghai Jiao Tong University,  University of Michigan-Shanghai Jiao Tong University Joint Institute, Shanghai 200240, China.
 (\email{nanaliu@sjtu.edu.cn})}
\and Chuwen Ma \thanks{School of Mathematical Sciences, Shanghai Jiao Tong University, Institute of Natural Sciences, Shanghai Jiao Tong University, Shanghai 200240, China. 
  (\email{chuwenii@lsec.cc.ac.cn}})}
\begin{document}
	
	\maketitle
	
	\begin{abstract}  
		We analyze the Schr\"odingerization method for quantum simulation of a general class of non-unitary dynamics with inhomogeneous source terms. The Schr\"odingerization technique, introduced in [S. Jin, N. Liu and Y. Yu, Phys. Rev. Lett., 133 (2024), 230602], transforms any linear ordinary and partial differential equations with non-unitary dynamics into a system under unitary dynamics via a warped phase transition that maps the equations into a higher dimension, making them suitable for quantum simulation. This technique can also be applied to these equations with inhomogeneous terms modeling source or forcing terms, or boundary and interface conditions, and discrete dynamical systems such as iterative methods in numerical linear algebra, through extra equations in the system. Difficulty arises with the presence of inhomogeneous terms since they can change the stability of the original system.

		In this paper, we systematically study—both theoretically and numerically—the important issue of recovering the original variables from the Schr\"odingerized equations, even when the evolution operator contains unstable modes. We show that, even with unstable modes, one can still construct a stable scheme; however, to recover the original variable, one needs to use suitable data in the extended space. We analyze and compare both the discrete and continuous Fourier transforms used in the extended dimension and derive corresponding error estimates, which allow one to use the more appropriate transform for specific equations. We also provide a smoother initialization for the Schr\"odingerized system to gain higher-order accuracy in the extended space. We homogenize the inhomogeneous terms with a stretch transformation, making it easier to recover the original variable. Our recovery technique also provides a simple and generic framework to solve general ill-posed problems in a computationally stable way.
	\end{abstract}

	\begin{keywords}
		{quantum algorithms, Schr\"odingerization, non-unitary dynamics, differential equations with inhomogeneous  terms.}
	\end{keywords}
	
	\begin{MSCcodes}
		65M70, 65M30, 65M15, 81-08
	\end{MSCcodes}

	\section{Introduction}
	Quantum computing is considered a promising candidate to overcome many limitations of 
	classical computing \cite{Divi95,Shor94,Stea98}, one of which is the curse of dimensionality. 
	It has been shown that quantum computers could potentially outperform the most powerful classical computers, with polynomial or even exponential speed-up,  for certain types of problems \cite{Ekert98,Nielssen2000}.
	
	In this paper, we focus on the general linear dynamical system as stated below 
	\begin{equation} \label{eq:ODE}
		\frac{\D}{\D t}\Bu = A(t) \Bu(t) +\Bb(t),\quad 
		\Bu(0) = \Bu_0,
	\end{equation}  
	where $\Bu$, $\Bb\in \bbC^n$, $A\in \bbC^{n\times n}$ is a time-dependent matrix.
	This system includes ordinary differential equations (ODEs) and partial differential equations (PDEs). In the continuous representation, $A(t)$ is the differential operator; in the spatially discrete version, a PDE becomes a system of ODEs.
	Classical algorithms become inefficient or even prohibitively expensive due to large dimension $n$ whereas quantum methods can possibly significantly speed up the computation.  
	Therefore, the design for quantum simulation algorithms for solving general dynamical systems --
	including both ODEs and PDEs -- is important for a wide range of applications, for example, molecular dynamics simulations, and high dimensional and multiscale PDEs.
	
	The development of quantum algorithms with up to exponential advantage excel in  unitary dynamics, in which  $A(t)=iH(t)$ with $H=H^{\dagger}$ (where $\dagger$ represents complex conjugate)  and $\Bb=\zerobf$ in \eqref{eq:ODE},
	which is also known as the Hamiltonian simulation.
    Many efficient quantum algorithms for Hamiltonian simulation have been developed in the literature (e.g., \cite{AFL21,AFL22,BACS07,BC09,BCCKS,BCCKS15,BCK15,BCSWW22,BRS14,Cam19,CMRS18,LC17,PQS11,SS21}).
	However, most dynamical systems in applications involve non-unitary dynamics, making them unsuitable for quantum simulation.
	A very recent proposal for such problems is based on Schr\"odingerization, which converts non-unitary dynamics to Schr\"odinger-type equations with unitary dynamics. This is achieved by a warped phase transformation that maps the system to one higher dimension. The original variable $\Bu$ can then be recovered via integration or pointwise evaluation in the extra-dimensional space.  
    This technique can be applied in both qubit-based \cite{JLY22b, JLY23, JLYPRL24} and continuous-variable frameworks \cite{CVPDE2023}, with the latter being suitable for analog quantum computing. 
	The method can also be extended to solve open quantum systems in a bounded domain with artificial boundary conditions \cite{JLLY22}, physical and interface conditions \cite{JLLY23}, and even discrete dynamical systems such as iterative methods in numerical linear algebra \cite{JinLiu-LA}.

	For non-Hermitian $H(t)$, the Schr\"odingerization first decomposes it into the Hermitian and anti-Hermitian  parts:
	\begin{equation}\label{H-decomp}
		A=H_1+i H_2, \qquad H_1 = (A+A^{\dagger})/2, \qquad H_2 = (A-A^{\dagger})/(2i),
	\end{equation}
	where $H_1=H_1^{\dagger}, H_2=H_2^{\dagger}$, followed by the warped phase transformation for the $H_1$ part. 
	A basic assumption for the application of the  Schr\"odingerization technique is that the eigenvalues of $H_1$ need to be negative \cite{JLY23,JLYPRL24}. This corresponds to the stability of solution to the original system \eqref{eq:ODE}. When there is an inhomogeneous term $b(t)$, one needs to enlarge the system so as to deal with just a homogeneous system. However, the new $H_1$ corresponding to the enlarged system will have positive eigenvalues--which will be proved in this paper-- violating the basic assumption of the Schr\"odingerization method.  We address this important issue in this paper. 
	
	We show that even if some of the eigenvalues of $H_1$ are positive, the Schr\"odingerization method can still be applied. However, in this case one needs to be more careful when recovering the original variable $\Bu$. Specifically, one
	can still obtain $\Bu$ by using an appropriate domain in the new extended space, which is not `distorted' by spurious right-moving waves generated by the positive eigenvalues of $H_1$. 
	
	We will also analyze both discrete and continuous Fourier transformations used in the  extended  dimension. The corresponding error estimates for the discretization will be  established. A smoothed initialization is also introduced to provide a higher order accuracy in approximations in the extended dimension (see Remark \ref{second-order}).
	Furthermore, a stretching transformation is used to lower the impact of positive eigenvalues of $H_1$,
	which makes it easier to recover the original variables.
	We also show that the stretch coefficient does not affect the error of the Schr\"odingerization discretization.
	
	The rest of the paper is organized as follows. In section~\ref{sec:review of the Schr}, we give a brief review of the Schr\"odingerization approach for general homogeneous linear ODEs and two different Fourier discretizations in the extended space for this problem.
	In section~\ref{sec:recovery from schr}, we show the conditions for correct recovery from the warped transformation, even when there exist positive eigenvalues of $H_1$, which triggers potential instability for the extended dynamical system. We also show
	the implementation protocol on quantum devices. The error estimates and complexity analysis are established in section~\ref{sec:error for schr}. In section~\ref{sec:the inhomogenous problem}, we consider the inhomogeneous problems and show that the enlarged homogeneous system will  contain unstable modes corresponding to positive eigenvalues of $H_1$.  The recovering technique introduced in section \ref{sec:recovery from schr} can be used to deal with this system.  Finally, we show the numerical tests in section~\ref{sec:numerical tests}.

	Throughout the paper, we restrict the simulation to a finite time interval $t\in [0,T]$.
	The notation $f\lesssim g$ stands for $f\leq Cg$ where $C$ is a positive constant independent of the numerical mesh size and time step.
	Moreover, we use a 0-based indexing, i.e. $j= \{0,1,\cdots,N-1\}$, or $j\in [N]$.
	Additionally, we utilize the notation $\ket{j}\in \bbC^N$ to represent a vector where the $j$-th component being $1$ and all other components are $0$.
	We denote the identity matrix and null matrix by $I$ and $O$, respectively. 
	The dimensions of these matrices should be clear from the context; otherwise, $I_N$ refers to the $N$-dimensional identity matrix.
	Without specific statements, $\|\Bb\|$ denotes the Euclidean norm defined by $\|\Bb\|=(\sum_i |b_i|^2)^{1/2}$ and $\|A\|$  denotes 
	matrix  $2-$norms defined by $\|A\| = \sup_{\|\Bx\|=1} \|A\Bx\|$.
	The vector-valued quantities are denoted by boldface symbols, such as $\BL^2(\Omega) = (L^2(\Omega))^{n}$.
	
	\section{A review of Schr\"odingerization for  general linear dynamical systems}
	\label{sec:review of the Schr}
	
	In this section, we briefly review the Schr\"odingerization approach for general linear dynamical systems.
	Defining an auxiliary vector function $\Br(t)\equiv \sum_i \ket{i}\in \bbR^{n}$ that remains constant in time, system \eqref{eq:ODE} can be rewritten as a homogeneous system
	\begin{equation} \label{eq: homo Au}
		\frac{\D }{\D t} \begin{bmatrix}
			\Bu \\
			\Br
		\end{bmatrix}=
		\begin{bmatrix}
			A &B\\
		O &O
		\end{bmatrix}\begin{bmatrix}
			\Bu \\
			\Br
		\end{bmatrix},\qquad
		\begin{bmatrix}
			\Bu(0)\\ \Br(0)
		\end{bmatrix} = \begin{bmatrix}
			\Bu_0\\ \Br_0
		\end{bmatrix},
	\end{equation}
	where $B=\text{diag}\{b_1,b_2,\cdots,b_n\}$ and $\Br_0 = \sum_{i}\ket{i}\in \bbR^n$.
	Therefore, without loss of generality, we assume $\Bb =\textbf{0}$ in \eqref{eq:ODE}. 
	Since any matrix can be decomposed into a Hermitian term and an anti-Hermitian one as in \eqref{H-decomp}, Equation \eqref{eq:ODE} can be expressed as
	\begin{equation}\label{eq:ODE1}
		\frac{\D}{\D t} \Bu = H_1 \Bu +iH_2 \Bu, \quad  \Bu(0) = \Bu_0.
	\end{equation}

     The Schr\"odingerization method introduced in  \cite{JLY22b,JLYPRL24} assumes that {\it all eigenvalues of $H_1$ are negative}. This corresponds to assuming that the original dynamical system \eqref{eq:ODE} is {\it stable}. Using the warped phase transformation $\Bw(t,p) = e^{-p}\Bu$ for $p>0$ and symmetrically extending the initial data to $p<0$, Equation \eqref{eq:ODE1} is converted to a system of linear convection equations:
	\begin{equation}\label{eq:up}
		\frac{\D}{\D t} \Bw = -H_1 \partial_p \Bw + iH_2\Bw, \quad 
		\Bw(0) = e^{-|p|}\Bu_0.
	\end{equation}
	When the eigenvalues of $H_1$ are all negative, the convection term of \eqref{eq:up} corresponds to a wave moving from the right to the left, thus one does not need to impose a boundary condition for $\Bw$ at $p=0$.  One does, however, need a boundary condition on the right hand side. Since $\Bw$ decays exponentially in $p$, one just needs to select $p=p^*$ large enough,  so $\Bw$ at this point is essentially zero  and a zero incoming boundary condition can be used at $p=p^*$. Then $\Bu$ can be recovered via
	\begin{equation}\label{eq:pre recovery}
	\Bu(t)= \int_0^{p^*} \Bw(t,p)\, dp\quad 
	\text{or} \quad 
	\Bu(t)=e^{p} w(t,p) \quad {\text{ for any}}\quad 0<p<p^* .
	\end{equation}
	
	\subsection{The discrete Fourier transform for Schr\"odingerization}
	\label{sec:dis tran}
	We denote $\sigma(H_1)$ the set of eigenvalues of $H_1$. To discretize the $p$ domain, we choose a large enough domain $p\in \Omega_p=(-\pi L,\pi L)$, $L>0$, such that  
	\begin{equation}\label{eq:require L}
		e^{- \pi L + 2\lambda_{\max}^+(H_1)T +R} \leq \epsilon,\quad 
		e^{- \pi L + \lambda_{\max}^-(H_1)T+\lambda_{\max}^+(H_1)T +R } \leq  \epsilon,
	\end{equation}
	where $\epsilon$ is the desired accuracy, $R\geq 1$ is the length of the recovery region with  $e^R = \mathcal{O}(1)$,
   and 
	\begin{equation*}
		\begin{aligned}
		   \lambda_{\max}^{+}(H_1) = \max\bigg\{\sup_{0<t<T} \{|\lambda|: \lambda\in \sigma(H_1(t)), \lambda>0\},0\bigg\},\\
			\lambda_{\max}^-(H_1) = \max\bigg\{\sup_{0<t<T}\{|\lambda|: \lambda\in \sigma(H_1(t)), \lambda<0\},0\bigg\}.
		\end{aligned}
	\end{equation*}
	 We note that \eqref{eq:require L} is intended for numerical analysis and is quite mild for numerical tests. Specifically, it implies that $e^{-\pi L + \lambda_{\max}^+(H_1)T}\lesssim  \epsilon$ and  $e^{-\pi L + \lambda_{\max}^-(H_1)T}\lesssim  \epsilon$.
     Furthermore, as implied by \eqref{eq:pre recovery}, the discretization error may grow exponentially with $p$.
     To mitigate this instability, we impose the condition $e^R=\mathcal{O}(1)$ on the recovery interval.
	 Then, we set the uniform mesh size  $\triangle p = 2\pi L/N_p$ where $N_p$ is a positive even integer and the grid points are denoted by $-\pi L=p_0<\cdots<p_{N_p}=\pi L$. Define the vector $\Bw_h$  the collection of the function $\Bw$ at these grid points by
	\begin{equation*}
		\Bw_h = \sum_{k\in [N_p]}	\sum_{j\in [n]} w_{kj} \ket{k}\ket{j}.
	\end{equation*}
	where $w_{kj}$ is  denotes the approximation to $w_j(t,p_k)$.
	The $1$-D basis functions for the Fourier spectral method are usually chosen as
	\begin{equation} \label{eq:phi nu}
		\phi_l(p) = e^{i\mu_l (p+\pi L)},\qquad \mu_l =  (l-N_p/2)/L,\quad l\in [N_p]. 
	\end{equation}
	Using \eqref{eq:phi nu}, we define 
	\begin{equation}
		\Phi = (\phi_{jl})_{N_p\times N_p} = (\phi_l(p_j))_{N_p\times N_p},  \quad
		D_{p} = \text{diag}\{\mu_0,\cdots,\mu_{N_p-1}\}.
	\end{equation}
	Considering the Fourier spectral discretisation on $p$, one easily gets
	\begin{equation}
		\frac{\D}{\D t} \Bw_h = -i(P\otimes H_1) \Bw_h + i(I_{N_p}\otimes H_2 )\Bw_h.
	\end{equation} 
	Here $P$ is the matrix representation of the momentum operator $-i\partial_p$  and
	defined by $P = \Phi D_{p} \Phi^{-1}$.
	Through a change of variables $\tilde{\Bw}_h=[\Phi^{-1}\otimes I_n]\Bw_h$, one gets
	\begin{equation}\label{eq:schro_dis}
		\begin{cases}
			\frac{\D}{\D t} \tilde{\Bw}_h = -i\big(D_p\otimes H_1  - I_{N_p}\otimes H_2\big) \tilde{\Bw}_h =-iH^d\tilde{\Bw}_h,\\
			\tilde \Bw_h(0) =\big(\Phi^{-1}\otimes I_n\big)\Bw_h(0).
		\end{cases}
	\end{equation}
	At this point, a quantum algorithm for Hamiltonian simulation can be constructed for the above linear system~\eqref{eq:schro_dis}.
   	If $H_1$ and $H_2$ are usually sparse,  then the Hamiltonian $H=D_p\otimes H_1-I_{N_p}\otimes H_2$ inherits the sparsity. 
	\subsection{The continuous Fourier transform for Schr\"odingerization}
	
	In the previous section, $\partial_p \Bw(t,p)$   is first discretized in $p$ by the discrete Fourier transform. Here, we consider the continuous Fourier transform of $\Bw(p)$ and its inverse transformation defined by
	\begin{equation*}
		\hat{\Bw}:= \mathscr{F}(\Bw)(\xi) = 
		\frac{1}{2\pi}\int_{\bbR} e^{i\xi p}\Bw(p)\,\d p,\qquad 
		\Bw:=\mathscr{F}^{-1}(\hat\Bw)(p)= \int_{\bbR} e^{-i\xi p}\hat{\Bw}(\xi)\d \xi.
	\end{equation*}
	The continuous Fourier transform in $p$ for \eqref{eq:up} gives
	\begin{equation}\label{eq:schro_conti}
		\begin{aligned}
			\frac{\D}{\D t} \hat{\Bw} &= i\xi  H_1 \hat{\Bw}+iH_2\hat{\Bw} = i(\xi H_1+H_2)\hat{\Bw},\\
			\hat{\Bw}(0) &= \frac{1}{\pi (1+\xi^2)}\Bu_0.
		\end{aligned}
	\end{equation}
	 We can first consider the truncation of the $\xi$- domain to a finite interval $[-X,X]$ with 
	\begin{equation}\label{eq:require X}
		X^{-1}=\mathcal{O}(\epsilon),
	\end{equation}
	where $\epsilon$ is the desired accuracy.
	Then canonical Hamiltonian simulation methods can be applied to the following unitary dynamics
	\begin{equation}\label{eq:schro_conti_computer}
		\begin{cases}
			\frac{\D}{\D t} \check{\Bw}_h = i(D_{\xi}\otimes H_1+I_{N+1} \otimes H_2 )\check{\Bw}_h = iH^c \check{\Bw}_h,\\
			\check{\Bw}_h(0)= \bm{\xi}_h \otimes \Bu_0,
		\end{cases}
	\end{equation}
	where $D_{\xi} = \text{diag}\{\xi_0,\xi_1,\cdots,\xi_{N}\}$ and $\bm{\xi}_h = \sum\limits_{j=0}^{N} \frac{\ket{j}}{\pi(1+\xi_j^2)}$. 
	
	It is also possible to consider the continuous Fourier transform without truncation in the $\xi$ domain. This is possible when considering implementation on analog quantum devices \cite{CVPDE2023}. In this case, instead of finite-dimensional matrices or states we define infinite dimensional vectors $|w(t)\rangle \equiv \int \Bw(t,p)|p\rangle dp$, which are acted on by infinite dimensional operators $\hat{p}$ and $\hat{\xi}$, where $[\hat{p},\hat{\xi}]=iI$. These infinite dimensional states and operators have natural meaning in the context of analog quantum computation, where they can represent for example quantum states of light or superconducting modes. If we let $|p\rangle$ and $|\xi\rangle$ denote the eigenvectors of $\hat{p}$ and $\hat{\xi}$ respectively, then $\langle p|\xi\rangle=\exp(ip \xi)/\sqrt{2\pi}$. Let $\mathcal{F}$ denote the continuous Fourier transform acting on these infinite dimensional states with respect to the auxiliary variable $p$, then $\mathcal{F}|p\rangle=|\xi\rangle$. Note that this $\mathcal{F}$ operation also has a natural interpretation in quantum systems and can be implemented in its continuous form. These $\{|p\rangle\}_{p \in \mathbb{R}}$ and $|\xi\rangle_{\xi \in \mathbb{R}}$ eigenstates each form a complete eigenbasis so $\int dp |p\rangle \langle p|=I=\int d \xi |\xi \rangle \langle \xi|$. Here the quantised momentum operator $\hat{\xi}$ is also associated with the derivative $\hat{\xi} \leftrightarrow -i\partial/\partial p$ and it is straightforward to show $i\hat{\xi} |w(t)\rangle=|\partial w/\partial p\rangle$.   
	
	In this case, Eq.~\eqref{eq:schro_conti} does not transform into Eq.~\eqref{eq:schro_conti_computer}, but rather to
	\begin{align} \label{eq:CVschro}
		& \frac{d |w(t)\rangle}{dt}=i(\hat{\xi} \otimes H_1+I \otimes H_2)|w(t)\rangle, \nonumber \\
		& |w(0)\rangle=\int \frac{1}{\pi(1+\xi^2)}|\xi\rangle d\xi\, \Bu_0.
	\end{align}

	\section{Recovery of the original variables and implementations on  quantum devices }
	\label{sec:recovery from schr}
	
	The purpose of this section is to show the rigorous conditions of the recovery $\Bu$ from $\Bw$ and the implementation on a quantum device. We consider the more general case, namely here we will {\it allow the eigenvalues of $H_1$ to be non-negative}, which is the case for many applications and extensions of the Schr\"odingerization method, for example systems with inhomogeneous terms \cite{JLY22b,JLYPRL24}, boundary value and interface problems \cite{JLLY22,JLLY23},
	and iterative methods in numerical linear algebra \cite{JinLiu-LA}. 
	
	\subsection{Recovery of the original variable}

	Since $H_1$ is Hermitian, assume it has $n$ real eigenvalues, ordered as
	\begin{equation}\label{eq:eigenvalues H1}
		\lambda_1(H_1)\leq \lambda_2(H_1)\leq \cdots\lambda_{n}(H_1), \quad 
		\text{for}\; \text{all}\; t\in [0,T].
	\end{equation}
	Next we define the solution to the time-dependent Hamiltonian system \eqref{eq:schro_conti} as
	\begin{equation*}
		\hat\Bw(t) = \Cu_{t,0}(\xi)\hat\Bw(0), 
	\end{equation*}
	where the unitary operator $\Cu_{t,s}$ is defined by the time-ordering exponential, $\Ct e$, via Dyson's series \cite{Lam98},
	\begin{equation}\label{eq:def time operator}
		\Cu_{t,s} = \Ct e^{i\int_s^t H(y)\;dy}
		= I + \sum_{n=1}^{\infty}\frac{i^n}{n!} \int_s^tdt_1\cdots \int_s^t dt_n 
		\Ct[H(t_1) H(t_2)\cdots H(t_n)],
	\end{equation}
	where $\Ct [H(t_1)H(t_2)\cdots H(t_n)] = H(t_{i_1})H(t_{i_2})\cdots H(t_{i_n})$ with $t_{i_1}>t_{i_2}\geq \cdots t_{i_n}$, 
	and $\Cu_{t,s}$ satisfies
	\begin{equation*}
		\frac{\D}{\D t} \Cu_{t,s} = i H(t)\Cu_{t,s},\quad
		\Cu_{t,s} = \Cu_{t,s'}\Cu_{s',s},\quad  \Cu_{t,s}^{\dagger} = \Cu_{s,t}.
	\end{equation*}
	\begin{theorem}\label{thm:recovery u}
		Assume the eigenvalues of $H_1$ satisfy \eqref{eq:eigenvalues H1}, the solution of 
		\eqref{eq:ODE}  can be recovered by
		\begin{equation}\label{eq:recover by one point}
			\Bu = e^{p} \Bw(t,p),\quad \text{for}\;\text{any}\quad  p\geq p^{\Diamond},
		\end{equation}
		where $p^{\Diamond}\geq \lambda_{\max}^+(H_1) T$, or recovered by using the integration,
		\begin{equation}\label{eq:recover by quad}
			\Bu = e^{p}\int_{p}^{\infty} \Bw(q)\;dq,\quad \text{for}\;\text{any}\quad  p\geq p^{\Diamond}.
		\end{equation}
	\end{theorem}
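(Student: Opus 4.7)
The plan is to work from the continuous Fourier representation of \eqref{eq:schro_conti} derived above. Since $\hat{\Bw}(0,\xi)=\frac{1}{\pi(1+\xi^2)}\Bu_0$ and $\hat{\Bw}(t,\xi)=\Cu_{t,0}(\xi)\hat{\Bw}(0,\xi)$, the inverse Fourier transform yields
\[
\Bw(t,p)=\int_{\bbR} e^{-i\xi p}\,\Cu_{t,0}(\xi)\,\frac{1}{\pi(1+\xi^2)}\,\Bu_0\,d\xi.
\]
My goal is to identify this integral with $e^{-p}\Bu(t)$ for every $p$ above the threshold stated in the theorem. Given that identification, \eqref{eq:recover by one point} follows by multiplying through by $e^p$, and \eqref{eq:recover by quad} follows from the elementary integration $\int_{p^{\Diamond}}^{\infty}e^{-p}\,dp=e^{-p^{\Diamond}}$.

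To evaluate the integral I would close the contour by a large semicircle in the lower half $\xi$-plane and apply the residue theorem. Two analytic ingredients are needed. First, $\xi\mapsto \Cu_{t,0}(\xi)$ is entire, because in the Dyson series \eqref{eq:def time operator} the operator $H(s,\xi)=\xi H_1(s)+H_2(s)$ is affine in $\xi$, each term is a matrix polynomial in $\xi$, and the series converges uniformly on compact subsets of $\bbC$. Second--and this is the crux--I need the growth bound
\[
\|\Cu_{t,0}(\alpha-i\eta)\|\leq e^{\eta\,\lambda_n(H_1)\,t},\qquad \alpha\in\bbR,\;\eta\geq 0,
\]
interpreting $\lambda_n(H_1)$ as $\sup_{s\in[0,T]}\lambda_n(H_1(s))$ in the time-dependent case. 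I would obtain this from an energy estimate on the ODE $\tfrac{d}{dt}\Cu_{t,0}(\xi)\Bv_0 = iH(t,\xi)\Cu_{t,0}(\xi)\Bv_0$: for $\xi=\alpha-i\eta$, the Hermiticity of $H_1(t)$ and $H_2(t)$ implies $\tfrac{d}{dt}\|\Cu_{t,0}(\xi)\Bv_0\|^2 = 2\eta\langle H_1(t)\Cu_{t,0}(\xi)\Bv_0,\Cu_{t,0}(\xi)\Bv_0\rangle \leq 2\eta\,\lambda_n(H_1)\,\|\Cu_{t,0}(\xi)\Bv_0\|^2$, and Gr\"onwall delivers the claim. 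Making this bound rigorous for the time-ordered propagator is the main technical step; everything else is routine contour manipulation.

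With these in hand, on the lower semicircle $\xi=Re^{-i\theta}$, $\theta\in[0,\pi]$, the integrand is bounded in norm by $R^{-2}\exp\!\big(-R\sin\theta\,(p-\lambda_n(H_1)t)\big)$, and Jordan's inequality $\sin\theta\geq 2\theta/\pi$ makes the semicircle contribution vanish as $R\to\infty$ whenever $p>\lambda_n(H_1)\,t$. This holds uniformly for every $t\in[0,T]$ as soon as $p\geq p^{\Diamond}\geq\max\{\lambda_n(H_1)T,0\}$; the endpoint case $p=p^{\Diamond}=\lambda_n(H_1)T$ is recovered by continuity of both sides in $p$. The only pole of $(1+\xi^2)^{-1}$ in the closed lower half plane is the simple pole at $\xi=-i$ with residue $i/2$, and $\Cu_{t,0}$ is analytic there, so the residue theorem (with the clockwise orientation of the closed contour) gives $\Bw(t,p)=e^{-p}\,\Cu_{t,0}(-i)\,\Bu_0$. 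Finally, substituting $\xi=-i$ in \eqref{eq:def time operator} yields $\Cu_{t,0}(-i)=\Ct\exp\!\big(\int_0^t(H_1(s)+iH_2(s))\,ds\big)=\Ct\exp\!\big(\int_0^t A(s)\,ds\big)$, which is exactly the propagator of \eqref{eq:ODE}; hence $\Cu_{t,0}(-i)\Bu_0=\Bu(t)$, and both recovery formulas follow.
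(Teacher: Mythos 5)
Your argument is correct, and it takes a genuinely different route from the paper. The paper does not evaluate the integral $\Bw(t,p)=\int_{\bbR}\frac{\Cu_{t,0}(\xi)}{\pi(1+\xi^2)}e^{-i\xi p}\,d\xi$ in closed form; instead it differentiates under the integral sign, uses a partial-fraction splitting of $\frac{i(\xi H_1+H_2)}{1+\xi^2}$ together with the cited vanishing lemma of An--Liu--Lin (their Lemma~5, which asserts that $\Cp\int_{\bbR}\frac{\Cu_{t,0}(\xi)e^{-i\xi p}}{1-i\xi}\,d\xi=0$ once $\lambda(H_1)-p/t\le 0$) to show that $\Bw(t,p)$ satisfies $\frac{\D}{\D t}\Bw=A(t)\Bw$ with $\Bw(0,p)=e^{-p}\Bu_0$, and then concludes by uniqueness of the ODE. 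You instead compute the integral directly: analyticity of $\xi\mapsto\Cu_{t,0}(\xi)$, the energy estimate $\|\Cu_{t,0}(\alpha-i\eta)\|\le e^{\eta\lambda_n(H_1)t}$ in the lower half-plane, closure of the contour there for $p\ge\lambda_n(H_1)t$, and the single residue at $\xi=-i$ giving $e^{-p}\Cu_{t,0}(-i)\Bu_0$ with $\Cu_{t,0}(-i)=\Ct\exp(\int_0^tA(s)\,ds)$. All the individual steps check out: the Gr\"onwall bound is rigorous for the propagator defined as the solution of $\frac{\D}{\D t}\Cu_{t,0}=iH(t)\Cu_{t,0}$ (you need not worry about the Dyson series), the residue at $\xi=-i$ with clockwise orientation gives exactly $e^{-p}\Cu_{t,0}(-i)\Bu_0$, and the arc contribution is $O(R^{-1})$ even in the endpoint case $p=\lambda_n(H_1)t$ because the $(1+\xi^2)^{-1}$ decay beats the arc length, so the continuity argument you invoke there is not even needed. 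In effect you have unpacked and generalized the lemma the paper cites as a black box: your version is more self-contained and exhibits the mechanism (analytic continuation of the propagator below the real axis, with growth governed by $\lambda_n(H_1)$) that makes the threshold $p^{\Diamond}\ge\max\{\lambda_n(H_1)T,0\}$ appear, while the paper's version avoids contour calculus at the top level by delegating it to the reference and leaning on ODE uniqueness.
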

	\begin{proof}
		Following the proof in \cite[Theorem 5, section 7.3]{evan16}, we conclude that the solution of \eqref{eq:up} is unique and given by 
		\begin{equation}\label{eq:w}
			\Bw(t,p) = \int_{\bbR} \hat{\Bw}(t,\xi) e^{-i\xi p}\;d\xi =
			\int_{\bbR} \frac{\Cu_{t,0}(\xi) \Bu_0}{\pi(1+\xi^2)}e^{-i\xi p}\;\d \xi.
		\end{equation}
		It is sufficient to prove that for any $p>p^{\Diamond}$, $\Bw(t,p)$ satisfies
		\begin{equation*}
			\frac{\D}{\D t} \Bw(t,p) = A(t)\Bw(t,p), \quad \Bw(0,p) = e^{-p}\Bu_0.
		\end{equation*}
		It is obvious  that the initial condition holds by letting $t=0$ in \eqref{eq:w}.
		Since $p> \lambda_{\max}^+(H_1) T$, for any $t\in[\delta, T]$, where $\delta>0$, one has 
		$\lambda(H_1)-p/t\leq 0$. According to \cite[Lemma~5]{ALL}, there holds
		\begin{equation}\label{eq:zero}
			\Cp \int_{\bbR} \frac{\Cu_{t,0}(\xi) e^{-i\xi p}}{1- i\xi} d\xi = 
			\Cp \int_{\bbR} \frac{1}{1- i\xi} \Ct e^{i\int_0^t (\xi (H_1(s)-p/t)+H_2(s))\;\d\,s}d\xi =0,
		\end{equation}
		where $\Cp$ stands for the Cauchy principal value of the integral.
		Differentiating $\Bw(t)$ with respect to t gives
		\begin{equation}\label{eq:dtw}
			\begin{aligned}
				\frac{\D}{\D t} \Bw(t,p) &=\Cp \int_{\bbR} \frac{i (\xi H_1(t)+H_2(t))
					\Cu_{t,0}(\xi )}{\pi(1+\xi^2)} 
				\Bu_0 e^{-i\xi p}\;d\xi  \\
				&=-\Cp \int_{\bbR}\frac{H_1(t) \Cu_{t,0}(\xi ) e^{-i\xi p}}{\pi(1-i\xi )}\Bu_0\;\d \xi  
				+\Cp\int_{\bbR} \frac{A(t)\Cu_{t,0}(\xi )\Bu_0}{\pi(1+\xi^2)} e^{-i\xi p}  \;\d \xi \\
				&=A(t) \Bw(t,p).
			\end{aligned}
		\end{equation}
		Considering $\delta$ can be small enough, \eqref{eq:dtw} holds for any $t\in(0,T]$.
		The proof is finished by the fact that $\Bw(t,p) = e^{-p}\Bu(t)$ for $p\geq p^{\Diamond}$.
	\end{proof}
	
	\begin{remark}
		When $\lambda(H_1)\leq 0$, we recover $\Bu$ through $\eqref{eq:w}$ by choosing $p=0$, and obtain
		\begin{equation}\label{eq:recover u with p0}
			\Bu = \int_{\bbR} \frac{1}{\pi(1+\xi^2)}\Ct e^{i\int_0^t(\xi H_1(s)+H_2(s))\,\d s} \Bu_0\;\d \xi ,
		\end{equation}
		which is the exact formula in \cite[Theorem 1]{ALL},  and it can be seen as a special recovery from Schr\"odingerization. 
		
	\end{remark}

	\subsection{Implementation on a quantum device}
	From~\eqref{eq:schro_dis}, $\ket{\Bw_h(t)}$ can be computed by 
	\begin{equation}\label{eq:quantum simulation dis fourier}
	\ket{\Bw_h(T)} = \big(\Phi\otimes I_n\big)\Cu(T)\big(\Phi^{-1}\otimes I_n\big)\ket{\Bw_h(0)},
	\end{equation} 
  where $\Cu(T)$ is a unitary operator, given by 
\[\Cu(T) = \Ct \exp(-i\int_0^T H^d(s)ds),\]
and $\Phi$ (or  $\Phi^{-1}$) is completed by (inverse) quantum Fourier transform (QFT or IQFT)\cite{Nielssen2000}.
	The Hamiltonian simulation with respect to $\Cu$ can be implemented as in \cite{BACS07,BCCKS,BCCKS15,BCK15,BHMT02,BN16,NB17,LC17,PQS11}.

	\subsubsection{Computation cost for the measurements}\label{sec:computation cost}
	 After the Hamiltonian simulation, one can recover the target variables for $\Bu$ by performing a measurement in the computational basis:
		\[M_k = \ket{k}\bra{k} \otimes I, \quad k \in\Ci_{\Diamond},\]
		where $\Ci_{\Diamond}  = \{j: p^{\Diamond}\leq p_j\leq p^{\Diamond}+R\}$ is referred to as the recovery index set.
		The state vector is then collapsed to
		\[ \ket{\Bu_*} \equiv \ket{k_*} \otimes \frac{1}{\mathcal{N}}\Big(\sum\limits_i w_{k_*i} \ket{i} \Big) , \quad
		\mathcal{N} = \Big(\sum\limits_i |w_{k_* i}|^2 \Big)^{1/2},\]
		for some $k_*$ in the recovery index set $\Ci_{\Diamond}$ with the probability
		\begin{align*}
			\text{P}_{\text{r}}(T,p_{k_*})
			= \frac{\sum_i |w_{k_*i}(T)|^2}{\sum_{k,i} |w_{ki}(T)|^2}
			\approx \frac{\|\Bw(T,p_{k_*})\|^2}{\sum_k \|\Bw(T,p_k)\|^2}.
		\end{align*}
		Then the likelihood of acquiring $\ket{\Bu_*}$ that satisfies $k_* \in \Ci_{\Diamond}$ is given by
		\begin{equation*}
			\text{P}_{\text{r}}^*
			=\frac{\sum_{k\in I_{\Diamond},i} |w_{ki}(T)|^2}{\sum_{k,i} |w_{ki}(T)|^2}
			\approx \frac{C_{e0}^2}{C_e^2}\frac{\|\Bu(T)\|^2}{\| \Bu(0) \|^2},
		\end{equation*}
		where
		\begin{equation}\label{Ce0}
			C_{e0} = \Big(\sum_{(p^{\Diamond}\le p_k\le (p^{\Diamond}+R)} e^{-2|p_k|}  \Big)^{1/2}, \qquad C_e = \Big(\sum_{k=0}^{N_p-1} e^{-2|p_k|}  \Big)^{1/2}.
		\end{equation}
		If $N_p$ is sufficiently large, we have
		\[\triangle p C_{e0}^2 \approx  \int_{p^\Diamond}^{p^{\Diamond+R}} e^{-2p}   \d p
		= \frac12 e^{-2p^\Diamond}(1-e^{-2R}) , \qquad
		\triangle p C_e^2 \approx  \int_{-\infty} ^{\infty}  e^{-2p} \d p= 1.\]
		Since $R\geq 1$, it yields
	$\frac{C_{e0}^2}{C_e^2} \approx \frac12 e^{-2p^\Diamond}(1-e^{-2R})>\frac{2}{5}e^{-2p^{\Diamond}}.$
	    By using the amplitude amplification, the repeated times for the measurements can be approximated as
	    \begin{equation}\label{eq:repeated times}
	    	g = \mathcal{O}\big(\frac{C_e\|\Bu(0)\|}{C_{e0}\|\Bu(T)\|}\big) = \mathcal{O}\big(e^{p^{\Diamond}}\frac{\|\Bu(0)\|}{\|\Bu(T)\|}\big).
	    \end{equation}
	    \begin{remark}
	    	If all the eigenvalue of $H_1$ are negative,  then one has $p^{\Diamond}=0$ and $C_e/C_{e0}=\mathcal{O}(1)$. 
	    	Conversely, if $H_1$ possesses positive eigenvalues, the solution exhibits exponential growth with $\|\Bu(T)\| = \mathcal{O}(e^{p^{\Diamond}}\|\Bu_0\|)$, which consequently implies $g = \mathcal{O}(1)$.
	    	For the backward heat equation discussed in \cite{JLM24SchrBackward}, the parameter $p^{\Diamond}$ is selected to truncate high-frequency Fourier modes — a choice that does not inherently require $p^{\Diamond}$ to be large; detailed justifications are provided in \cite{JLM24SchrBackward}.
	    \end{remark}

	The continuous or analog version is also similar, and it is possible in principle to implement $\Bu=e^{p^{\Diamond}}\int_{p^{\Diamond}}^{\infty} \Bw(p) dp $ directly without discretisation in the $p$ domain, using the smooth projective operator $\int_{p^{\Diamond}}^{\infty} f(p) |p\rangle \langle p| dp$, where $f(p)$ can model imperfections in the detection device. The probability of recovering the desired quantum state $\Bu(t)/\|\Bu(t)\|$ is then $(\int_{p^{\Diamond}}^{\infty} f(p) dp \|\Bu(t)\|/\Bu(0)\|)^2$ \cite{CVPDE2023}. 
	\subsection{Turning a non-autonomous system into an autonomous one}
	\label{sub:time}
	Recently, a new method was proposed  in \cite{cjL23} which can turn any  non-autonomous unitary dynamical system into an autonomous unitary system.
	First, via Schr\"odingerization, one obtains a time-dependent Hamiltonians 
	\begin{equation}
		\frac{\D}{\D t}  \Bw_h = -i H(t) \Bw_h,\quad 
		H = H^{\dagger}.
	\end{equation}
	By introducing a new ``time" variable $s$, the problem becomes a new linear PDE  defined in one higher dimension but with time-independent coefficients,
	\begin{equation}
		\frac{\partial \Bv}{\partial t} = -\frac{\partial \Bv}{\partial s} -i H(s) \Bv \qquad 
		\Bv(0,s) = \delta(s) \Bw_h(0), \quad s\in \bbR,
	\end{equation} 
	where $\delta(s)$ is a dirac $\delta$-function.  One can easily recover $ \Bw_h$ by  $\Bw_h = \int_{-\infty}^{\infty} \Bv(t,s)\;ds$. 
 
	\subsubsection{The discrete Fourier transform}
	Since $v$ decays to zero as $s$ approaches infinity, the $s$-region can be truncated to $[-\pi S, \pi S]$, where $\pi S > 4\omega +T$, with $2\omega$ representing the length of the support set of the approximated delta function. Choosing $S$ sufficiently large ensures that the compact support of the approximated delta function remains entirely within the computational domain throughout the simulation, allowing the spectral method to be applied.
	The transformation 
	and difference matrix are defined by
	\begin{equation*}
		(\Phi_s)_{lj} = (e^{i\mu^s_l(j\triangle s)}), \quad 
		D_s = \text{diag} \{\mu^s_0,\mu^s_1,\dots,\mu^s_{N_s-1}\},
		\quad \mu^s_l = (l-\frac{N_s}{2})S, \quad l,j\in[N_s],
	\end{equation*}
	where $\triangle s = 2\pi S/N_s$.
	Applying the discrete Fourier spectral discretization, it yields a time-{\it independent} system as
	\begin{equation}\label{eq:tilde v time independent}
		\frac{\D}{\D t} \tilde \Bv_h = -i \big(D_s \otimes I+I_{N}\otimes H^d\big)\tilde \Bv_h, \quad 
		\tilde \Bv_h (0) = [\Phi_s^{-1}\otimes I] (\mathbf{\delta}_{\Bh}\otimes 
		\tilde \Bw_h(0)),
	\end{equation}
	where $\mathbf{\delta}_{\Bh} = \sum\limits_{j\in [N_s]}\delta_w(s_j)\ket{j}$ with 
	$s_j = -\pi S + j\triangle s$ and 
	$\delta_{\omega}$ is an approximation to $\delta$ function defined, for example, by choosing
	\begin{equation*}
		\delta_{\omega}(x) = \frac{1}{\omega} \bigg(1-\frac{1}{2}|1+\cos(\pi \frac{x}{\omega})|\bigg)\quad |x|\leq \omega, \quad 
		\delta_{\omega}(x) = 0\quad |x|\geq \omega.
	\end{equation*}
	Here $\omega = m\triangle s$, where $m$ is the number of mesh points within the support of $\delta_{\omega}$.
	\subsubsection{The continuous Fourier transform}
	For a continuous-variable formulation, one can choose $\delta_{\omega}$ to be a Gaussian function
	$$\delta_{\omega}(x) =  \frac{1}{\sqrt{2\pi \omega^2}} e^{-x^2/(2\omega^2)}.$$
	The Fourier transform of $\delta_{\omega}$ is given by 
	$$\hat{\delta}_{\omega}(\hat{x}) = \frac{1}{2\pi}\int_{\bbR} e^{i\hat x x} \delta_{\omega}(x) dx = \frac{1}{2\pi} e^{-\frac{(\omega \hat x)^2}{2}}.$$
      Since $\hat{\delta}_{\omega}$ decays exponentially in phase space, we only need to truncate the $\hat x$ interval to a finite domain $[-\hat X, \hat X]$, where the truncation size $\hat X$  satisfies $\hat X = \mathcal{O}(\ln \epsilon^{-1})$.
	Applying the continuous Fourier spectral discretization, one gets the time-{\it independent} Hamiltonian 
	\begin{equation}\label{eq:check v time independent}
		\frac{\D}{\D t}\check{\Bv}_h = i(D_{\hat{x}}\otimes I + I\otimes H^c)\check{\Bv}_h,\quad 
		\check{\Bv}_h(0) = \hat{\bm{\delta}}_h\otimes \check{\Bw}_h(0),
	\end{equation}
	where $D_{\hat x} =\text{diag}\{\hat x_0,\hat x_1,\cdots,\hat x_{N_{\hat x}}\}$ and $\hat{\bm{\delta}}_h = \frac{1}{2\pi}\sum_{j=0}^{N_{\hat x}} e^{-\frac{(\hat x_j \omega)^2}{2}}\ket{j}$, $\hat x_j = -\hat X+2\hat Xj/N_{\hat x}$ and $\omega = 2\hat X/N_{\hat x}$. 
	
	From \eqref{eq:tilde v time independent} and \eqref{eq:check v time independent}, it is easy to find the evolution matrices are time independent.
	
	\section{Error estimates for the discretizations of the Schr\"odingerized system}
	\label{sec:error for schr}
	In this section, we establish rigorous error bounds for the discrete formulations of the Schr\"odingerized system, corresponding to the numerical schemes \eqref{eq:schro_dis}  and \eqref{eq:schro_conti_computer}, respectively.

	\subsection{Error estimates for \eqref{eq:schro_dis} }
	 
	Define the complex $(N_p+1)$ dimensional space with respect to $p$ 
	\begin{equation*}
		X_{N_p}^p = \text{span}\{e^{ik(p/L)}:-N_p/2\leq k\leq N_p/2 \}. 
	\end{equation*}
	The approximation of $\Bw$ in the finite space $X_{N_p}^p$ from the numerical solution in \eqref{eq:schro_dis} is 
	\begin{equation}\label{eq:whd(t,x,p)}
		\Bw_h^d(t,p) =  \sum_{|k|\leq N_p/2} \tilde \Bw_{j_k} e^{ik(\frac{p}{L}+\pi)}, \quad \tilde \Bw_{j_k} = (\bra{j_k}\otimes I)\tilde \Bw_h(t),
	\end{equation}
	where  $j_k = -k+N_p/2$. Correspondingly, the approximation of $\Bu$ is defined by 
	\begin{equation}\label{eq:uhd}
		\Bu_h^d(t,p) = e^{p} \Bw_h^d(t,p) \quad p\in (p^{\Diamond},p^{\Diamond}+R).
	\end{equation}
    
     From \eqref{eq:uhd}, we observe that to estimate the error between $\Bu_h^d$ and $\Bu$, it suffices to bound the error between $\Bw_h^d$ and $\Bw$. 
     It is apparent that the discretization serves as an approximation for the following system with periodic boundaries:
	\begin{equation}\label{eq:periodic w}
		\begin{cases}
			\frac{\D}{\D t} \mathcal{W}= -H_1\partial_p \mathcal{W} + iH_2 \mathcal{W}, \quad  0\leq t\leq T,\\
			\mathcal{W}(t,-\pi L) = \mathcal{W}(t,\pi L),\\
			\mathcal{W}(0,p) = \mathcal{G}(p)\Bu_0,
		\end{cases}
	\end{equation} 
	where $ \mathcal{G}$ is periodic with a period of $2\pi L$, such that 
	\begin{equation*}
		\mathcal{G}(p) = e^{-|p-(j+1)\pi L|}\quad  j\pi L \leq p <(j+2)\pi L,\quad j\in \bbZ.
	\end{equation*}
	Since $\mathcal{W}\in H^1(\Omega_p)$ is periodic, the standard estimate of spectral methods shows 
	\begin{equation}\label{eq:err peri}
		\|\Bw_h^d(T,p) - \mathcal{W}(T,p)\|_{\BL^2(\Omega_p)} \lesssim \triangle p \|\mathcal{W}(T,p)\|_{\BH^1(\Omega_p)}
		\lesssim \triangle p \|\Bu(T)\|.
	\end{equation}
	In order to estimate the error between $\Bw_h^d$ and $\Bw$, it is sufficient to analyze the error between $\Bw$ and $\mathcal{W}$.

	\begin{lemma}\label{lem:SigmaHu}
		Assume $ H(t),\, H_u(t)\in \bbC^{n,n}$ are 
		Hermitian matrices, and $\Bv\in \bbC^n$ satisfies
		\begin{equation*}
			\frac{\D}{\D t}\Bv = i(H + H_u)\Bv,\quad \Bv(0)  = \Bv_0.
		\end{equation*} 
		Then, there exits a unitary matrix $U_H$ such that
		\begin{equation}\label{eq:evolution v}
			\Bv(t) = \Ct e^{i\int_0^t H(s)\;ds}U_H(t) \Bv_0.
		\end{equation}
	\end{lemma}
	\begin{proof}
		Differentiating both sides of \eqref{eq:evolution v} with respect to $t$, it is easy to observe that $U_H(t)$ satisfies
		\begin{equation}\label{eq:U_H}
			\frac{\D }{\D t} U_H(t) = i\Ct e^{-i\int_0^t H(s)} H_u \Ct e^{i\int_0^t H(s)\;ds}U_H = i V(t)U_H ,\quad  U_H(0) = I.
		\end{equation}
		The proof is finished by noting that $V(t)$ is Hermitian.
	\end{proof}

	\begin{lemma}\label{lem:eH1e_L}
		Let $\mathcal{E}(t,p) = \mathcal{W}(t,p) - \Bw(t,p)$, it follows that 
		\begin{align}
			\int_0^T  \mathcal{E}^{\dagger}(t,-\pi L)H_1 \mathcal{E}(t,-\pi L) dt &\lesssim e^{-2\pi L + 2\lambda_{\max}^+(H_1)T} \|\Bu_0\|^2, \label{eq:int eHe1}\\ 
			\int_0^T  \mathcal{E}^{\dagger}(t,\pi L)H_1 \mathcal{E}(t,\pi L) dt &\lesssim e^{-2\pi L + 2\lambda_{\max}^-(H_1)T} \|\Bu_0\|^2. \label{eq:int eHe2}
		\end{align}
	\end{lemma}
		
		\begin{proof}
			Since $H_1$ is Hermitian, there exits a unitary $U(t)$ such that
			$H_1(t) = U(t) \Sigma(t) (U(t))^{\dagger}$, where $\Sigma(t)=\text{diag}\{\lambda_1(t), \cdots, \lambda_n(t)\}$. 
			Introducing the change of variable $\Bw_u = U^{\dagger}\Bw$, one gets the equation of 
			$\hat{\Bw}_u = \mathscr{F}(\Bw_u)$ as 
			\begin{equation*}
				\frac{\D}{\D t} \hat \Bw_u = i\xi \Sigma \hat \Bw_u +i H_{u} \hat \Bw_u,
			\end{equation*}
			where $H_{u} = U H_2 U^{\dagger}-\partial_t U U^{\dagger}/i$ is Hermitian. Here we have used the fact that  $\partial_t U U^{\dagger}$ is skew-symmetric.  
			From Lemma~\ref{lem:SigmaHu}, 
			by choosing $p = -\pi L$, one has 
			\begin{equation*}
				\begin{aligned}
					\Bw(t,-\pi L) =  \int_{\bbR} U(t) \Lambda^w(t,\xi)
					 U_H(t,\xi) U^{\dagger}(0)\Bu_0  \,d\xi ,
					\end{aligned}
			\end{equation*}
			where $\Lambda^{w}(t,\xi)$ is a diagonal matrix with diagonal entries 
			\begin{equation*}
				\Lambda^w_{jj} = \frac{1}{\pi(1+\xi^2)} e^{i \xi (\int_0^t \lambda_j(s)ds+\pi L )},
			\end{equation*} 
			and $U_H(t,\xi)$ is a unitary matrix, satisfying
			\begin{equation*}
				\frac{\D}{\D t} U_H = i \Ct e^{-i \int_0^t \xi \Sigma(s) ds}H_u  \Ct e^{i \int_0^t \xi \Sigma(s) ds} U_H,\quad U_H(0) = I.
			\end{equation*}
			Similarly, one gets 
			\begin{equation*}
				\mathcal{W}(t,-\pi L) = \int_{\bbR} U(t) \Lambda^{\mathcal{W}}(t,\xi)
				U_H(t,\xi) U^{\dagger}(0)\Bu_0  \,d\xi, 
			\end{equation*}
			with $\Lambda^{\mathcal{W}}(t,\xi)$ the diagonal matrix, 
			\begin{equation*}
				\Lambda^{\mathcal{W}}_{jj}(t,\xi) = \hat{\mathcal{G}}(\xi) e^{i\xi (\int_0^t \lambda_j(s) ds + \pi L)}.
			\end{equation*}
			Letting $\mathcal{E}(t,p) = \mathcal{W}(t,p) - \Bw(t,p)$ and noting $U_H$ is unitary, careful calculation gives 
			\begin{equation}\label{eq:eH1e_bound}
				\begin{aligned}
			    &\int_0^T\mathcal{E}^{\dagger}(t,-\pi L) H_1 \mathcal{E}(t,-\pi L)dt \\
			   =&\int_0^T \Bu_0^{\dagger} U(0)\bigg(\int_{\bbR} U_H^{\dagger}
			    (\Lambda^{\mathcal{W}}-\Lambda^w) \Sigma (\Lambda^{\mathcal{W}}-\Lambda^w) U_H \, d\xi \bigg) U^{\dagger}(0)\Bu_0 dt\\
		   \leq \,&\big\| \Lambda\big\|\big\|\Bu_0\big\|^2,
			    \end{aligned}
			\end{equation}
			where the diagonal matrix $\Lambda$ is defined by 
			\begin{equation*}
				\Lambda = \int_{0}^T \int_{\bbR} (\Lambda^{\mathcal{W}}-\Lambda^w ) \Sigma(t)  (\Lambda^{\mathcal{W}}-\Lambda^w ) d\xi dt.
			\end{equation*}
			It follows from the inverse Fourier transform that the diagonal entries of $\Lambda $ are
			\begin{equation*}
					\Lambda_{jj} = \int_0^T \big(e^{-|\int_0^t \lambda_j(s)ds+\pi L|} - \mathcal{G}(-\int_0^t \lambda_j(s)ds-\pi L)\big)^2\lambda_j(t) dt.
			\end{equation*}
		   Changing the variable of integration by letting $q = \int_0^t \lambda_j(s)ds + \pi L$, and recalling  the definition of $\mathcal{G}$, 
			it yields
			\begin{equation}\label{eq:e-G}
				\begin{aligned}
				\Lambda_{jj} = 
					\int_{\pi L}^{\pi L + \int_0^T \lambda_j(s)ds} (e^{-|q|}-\mathcal{G}(-q))^2 d q  \lesssim 
					e^{-2\pi L + 2\lambda_{\max}^{+}(H_1)T} +e^{-2\pi L}.
				\end{aligned}
			\end{equation}	
			The proof for \eqref{eq:int eHe1} is completed by inserting \eqref{eq:e-G} into \eqref{eq:eH1e_bound}. The proof for \eqref{eq:int eHe2} is similar, which is omitted here.
		 \end{proof}

	  \begin{lemma}\label{lem:mathcal E T}
	  	Assume  $\Bw(t,p)$ and $\mathcal{W}(t,p)$ are defined in \eqref{eq:up} and \eqref{eq:periodic w}, respectively, it follows that 
	  	\begin{equation*}
	  		\|\Bw(T,p)-\mathcal{W}(T,p)\|_{\BL^2(\Omega_p)} \lesssim (e^{-\pi L +\lambda_{\max}^+(H_1)T} + e^{-\pi L +\lambda_{\max}^-(H_1)T}) \|\Bu_0\|.
	  	\end{equation*}
	  	\end{lemma}
       \begin{proof}
       	By introducing $\mathcal{E}(t,p) = \mathcal{W}(t,p) -\Bw(t,p)$, one has 
       	\begin{equation}\label{eq:err E}
       		\frac{\D}{\D t} \mathcal{E} = -H_1\partial_p \mathcal{E} + iH_2 \mathcal{E},\quad 
       		\mathcal{E}(0,p) =\begin{cases}
       			\textbf{0},\quad  \quad \quad  \quad\; \text{in} \;  \Omega_p \\
       			 \mathcal{G} - e^{-|p|},\quad  \text{in}\; \bbR \backslash \Omega_p
       		\end{cases} .
       	\end{equation}
       	Testing \eqref{eq:err E} against $\mathcal{E}^{\dagger}$ and integrating by parts, it yields 
       	\begin{equation*}
       		\int_0^T \frac{\D}{\D t} \|\mathcal{E}\|_{\BL^2(\Omega_p)}^2 = -\int_0^T\mathcal{E}^{\dagger}(t,\pi L) H_1 \mathcal{E}(t,\pi L)+\int_0^T \mathcal{E}^{\dagger}(t,-\pi L) H_1 \mathcal{E}(t,-\pi L).
       	\end{equation*}
       Since  $H_1$ may have negative eigenvalues, one has
       	\begin{equation*}
       		\begin{aligned}
       		\|\mathcal{E}(T,p)\|_{\BL^2(\Omega_p)}^2 & \leq \bigg|\int_0^T\mathcal{E}^{\dagger}(t,\pi L) H_1 \mathcal{E}(t,\pi L)\bigg|
       		+\bigg|\int_0^T \mathcal{E}^{\dagger}(t,-\pi L) H_1 \mathcal{E}(t,-\pi L)\bigg|.
       		\end{aligned}
       	\end{equation*}
       	The proof is finished by using Lemma~\ref{lem:eH1e_L}.
       	\end{proof}

    From Theorem \ref{thm:recovery u}, we may regard $\Bu$ as a function with respect to the variable $p$. With these preparations, we are now ready to present the main result of this subsection.
	\begin{theorem}\label{thm:err of dis}
		Suppose $\pi L$ satisfies \eqref{eq:require L} with $\epsilon$ the desired accuracy. Assume $\Bu_h^d$ is defined in \eqref{eq:uhd}. 
		There holds 
		\begin{equation*}
			\|\Bu_h^d(T) - \Bu(T)\|_{L^2(\tilde\Omega_p)} \lesssim \triangle p e^{p^{\Diamond}} \|\Bu(T)\| + \epsilon \|\Bu_0\|,
		\end{equation*}
		where $\tilde \Omega_p = (p^{\Diamond}, p^{\Diamond}+R)$,
        $R\geq 1$ is the length of the recovery region with $e^R =\mathcal{O}(1)$.
		\end{theorem}
		\begin{proof}
			According to the recovery rule in Theorem \ref{thm:recovery u}, it follows that 
			\begin{equation*}
				\|\Bu-\Bu_h^d\|_{\BL^2(\tilde{\Omega}_p)}=\|e^p\Bw - e^p \Bw_h^d\|_{\BL^2(\tilde{\Omega}_p)}.
			\end{equation*}
			From the triangle equality, Lemma \ref{lem:mathcal E T} and \eqref{eq:err peri}, it yields 
			\begin{align*}
				\|\Bu - \Bu_h^d\|_{\BL^2(\tilde{\Omega}_p)} &\leq 
				\|e^p\mathcal{W} - e^p \Bw_h^d\|_{\BL^2(\tilde{\Omega}_p)} + \|e^p \Bw - e^p \mathcal{W}\|_{\BL^2(\tilde{\Omega}_p)} \notag \\
				&\lesssim \triangle p e^{p^{\Diamond}+R}\|\Bu(T)\|+ e^{-\pi L + p^{\Diamond}+R}(e^{ \lambda_{\max}^+(H_1)T}+e^{\lambda_{\max}^-(H_1)T})\|\Bu_0\|. 
			\end{align*} 
			The proof is finished by considering $e^R=\mathcal{O}(1)$ and \eqref{eq:require L}.
			\end{proof}
	
	\begin{remark}\label{second-order}
		
		It is noted from Theorem~\ref{thm:err of dis}, the limitation of the convergence order mainly comes from the non-smoothness of the initial values.
		In order to improve the whole convergence rates, we change the initial value of $\Bw(0)$ as
		$\Bw(0) = \psi(p)\Bu_0$, where $\psi(p)\in H^r(\bbR)$ satisfies $\psi(p) = e^{-p} $ for $p\geq 0$.
		Thus $\mathcal{W},\Bw\in \BH^r(\Omega_p)$ and the error estimate implies 
		\begin{align}
			\|\Bu_h^d(T) - \Bu(T)\|_{\BL^2(\tilde\Omega_p)} 
			\lesssim 
			\triangle p^r e^{p^{\Diamond}} \|\Bu(T)\|+\epsilon \|\Bu_0\|.
			\label{eq:err conti L2 one order high}
		\end{align}
The explicit construction of $\psi(p)$ is given in \cite[section 4.3]{JLM24SchrBackward}. For $r=2$, a smoother initial function is given by  
		\begin{equation} \label{eq:smooth initial}
			\psi(p) = \begin{cases}
				(-3+3e^{-1})p^3+(-5+4e^{-1})p^2-p+1 \quad p\in(-1,0),\\
				e^{-|p|} \quad \text{otherwise}.
			\end{cases}
		\end{equation}
        Assuming that $L$ is sufficiently large and the grid size   $\triangle p =\mathcal{O}(\sqrt[r]{\epsilon})$ is sufficiently small, 
        we can eliminate the constants hidden in the notation of $\lesssim$ in \eqref{eq:err conti L2 one order high}, resulting in the following formula:
        \begin{equation}\label{eq:small error high order}
        	\|\Bu(T) - \Bu_*\| \leq \frac{\epsilon}{4}\|\Bu(T)\|,
        \end{equation}
        where $\Bu_* = e^{p_{k_*}}(\bra{k_*}\otimes I) \Bw_h$, $k_*\in \Ci_{\Diamond}$.
		The extra qubits needed due to the extra dimension are 
        $\mathcal{O}\big(\log (\frac{L}{\sqrt[r]{\epsilon}}))$,
		Since $r$ can be arbitrarily large, the number of qubits required for the smooth extension in p-dimensional space achieves a nearly exponential reduction compared to the original Schr\"odingerization.
		
	\end{remark}

	\subsection{Error estimates for \eqref{eq:schro_conti_computer}}
	The approximation for $\Bw$ is computed by 
	\begin{equation}\label{eq:whc}
		\Bw_h^c  = \triangle \xi \sum_{j=1}^{N-1}\check{\Bw}_{j} e^{-i\xi_j p}
		+\frac{\triangle \xi}{2} \big(\check \Bw_N e^{-i \xi_N p}+\check \Bw_0e^{-i\xi_0 p}\big).
	\end{equation}
	Here $\check \Bw_{j} = (\bra{j}\otimes I_n)\check \Bw_h$. 
	Without considering the error of quantum simulation, one gets $\check \Bw_j = \hat \Bw(t,\xi_j)$, 
	$\xi_j = -X +2jX/N$. It is obvious to see that $\Bw_h^c$ is the numerical integration of  $\int_{-X}^{X} e^{-i\xi p} \hat\Bw\;d\xi \approx \int_{\bbR}e^{-i \xi p} \hat \Bw\;d\xi$. 
	Then, we define the approximation to $\Bu$ by 
	\begin{equation}\label{def:uhc}
		\Bu_h^c(t,p) = e^p \Bw_h^c(t,p)\quad p\in (p^{\Diamond},p^{\Diamond}+R).
	\end{equation}

	\begin{lemma}\label{thm:err of conti}
		The error estimate for \eqref{eq:up} computed by \eqref{eq:schro_conti_computer} is  given by 
		\begin{equation}\label{eq:err of whc-w}
			\|\Bw_h^c(T,p) -\Bw(T,p)\| \lesssim  (X^{-1}+ X \triangle \xi^2\|H_p^2\| )\|\Bu_0\| \quad p>p^{\diamond},
		\end{equation}
		where 
		$H_p = \int_0^T H_1(s)\;ds-pI$.
	\end{lemma}
	\begin{proof}
		The error estimate for \eqref{eq:up} by \eqref{eq:schro_conti_computer} consists of two parts. 
		The first part is from the truncation of $\xi$ domain. 
		From the definition of $\Cu_{t,0}(\xi)$ in \eqref{eq:def time operator}, it yields
		\begin{equation*}
			\|\Cu_{t,0}(\xi)e^{-i\xi p}\|\leq 1.
		\end{equation*}
		The truncation error is then bounded by
		\begin{equation}\label{eq:1st}
			\big\|\int_{(-\infty,-X]\cup [X,\infty)}\frac{\Cu_{t,0}(\xi)}{\pi(1+\xi^2)}e^{-i \xi p}\Bu_0\;d\xi \big\|
			\leq \int_{X}^{\infty} \frac{2 \|\Bu_0\|}{\pi(1+\xi^2)}\;d\xi \lesssim  X^{-1}\big\|\Bu_0\big\|.
		\end{equation}
		The second part is from the numerical integration. Write $F(t,\xi,p) = \frac{\Cu_{t,0}(\xi)}{\pi(1+\xi^2)}e^{-i\xi p}$ and $H_p = \int_0^t H_1(s)\;ds-pI$, one then has
		\begin{equation*}
			\partial_{\xi}^2 F(t,\xi,p)= 
			\bigg(-\frac{H_p^2}{\pi(1+\xi^2)}
			-\frac{4 i\xi H_p+2}{\pi (1+\xi^2)^2}
			+\frac{8\xi^2 }{\pi (1+\xi^2)^3}\bigg)\Ct e^{i \int_0^t \xi (H_1-\frac{p}{t}I)+ H_2(s)\;ds}.
		\end{equation*}
		According to the error estimate of numerical quad,
		there exits $\xi^*\in [-X,X]$ such that 
		\begin{equation}\label{eq:2nd}
			\big\|\Bw_h^c-\int_{-X}^X 
			\frac{e^{-i\xi p}\Cu_{t,0}(\xi) \Bu_0}{\pi(1+\xi^2)}d\xi\big\|
			=\frac{\triangle \xi^2 X}{6}
			\|\partial_{\xi}^2 F(t,\xi^*,p)\|\|\Bu_0\|
			\lesssim \triangle \xi^2 X \|H_p^2\|\|\Bu_0\|.
		\end{equation}
		The proof is completed by the triangle inequality from \eqref{eq:1st} and \eqref{eq:2nd}.
	\end{proof}

		Integrating the inequality of \eqref{eq:err of whc-w}, the error between $\Bu_h^c$ and $\Bu$ is shown as follows.
		
		\begin{theorem}
			Assume $X$ satisfies \eqref{eq:require X}, $\Bu_h^c$ is defined in \eqref{def:uhc}. It follows that 
			\begin{equation}\label{eq:uhc-u}
				\|\Bu_h^c - \Bu\|_{\BL^2(\tilde{\Omega}_p)} \lesssim \bigg(\epsilon + \epsilon^{-1}\triangle \xi^2  \big((p^{\Diamond})^2+\big\|\int_0^TH_1(s)ds\big \|^2\big)\bigg) e^{p^{\Diamond}}\|\Bu_0\|,
			\end{equation}
				where $\tilde \Omega_p = (p^{\Diamond}, p^{\Diamond}+R)$, and  $R\geq 1$ is the length of the recovery domain with $e^R = \mathcal{O}(1)$.
			\end{theorem}
	\begin{remark}
		It is noted that the first part $\epsilon$ dominates in \eqref{eq:uhc-u} when
		$\triangle \xi$ is small enough such that $\triangle \xi \lesssim  \epsilon/\sqrt{(p^{\Diamond})^2+\big\|\int_0^TH_1(s)ds\big \|^2}$.
        By using the Fourier transform of smoother initial data,  we no longer need to truncate the interval at $X=\mathcal{O}(1/\epsilon)$,and can instead use the much smaller
		cutoff $X = \mathcal{O}(\log(1/\epsilon)^{1/\beta})$, $\beta \in (0,1)$ .  This leads to an exponential reduction in the Hamiltonian simulation time caused by the extra dimension 
		with respect to $\epsilon$ when compared to the original  formula \eqref{eq:schro_conti_computer} \cite{ACL23}.
	\end{remark}
	\subsection{Complexity analysis}
	In order to get the total query complexity, we need to obtain the cost of 
    simulating the time-evolution operator $\Cu(T)$.
	Since a time-dependent system can be made autonomous (see section \ref{sub:time}), we focus on the time-independent cost of $\Cu(T)$ for the discrete Fourier transform of Schr\"odingerization. The complexity of the continuous Fourier transform is similar to \cite{ACL23} and is omitted here.
	 We introduce the  
	the complexity of Hamiltonian simulation in \cite{gilyen2019quantum} by QSVT.
	\begin{lemma}[Complexity of block-Hamiltonian simulation]\label{lem:hamiltonian complexity}
		Let \(\epsilon \in (0, \frac{1}{2})\), \(t \in \mathbb{R}\), and \(\alpha \in \mathbb{R}^+\). Let \(U\) be an \((\alpha, a, 0)\)-block-encoding of the unknown Hamiltonian \(H\). To implement an \(\epsilon\)-precise Hamiltonian simulation unitary \(V\) that is a \((1, a + 2, \epsilon)\)-block-encoding of \(e^{\mathrm{i}tH}\), the unitary \(U\) must be queried
		\begin{equation}\label{eq:ham-query}
			\Theta\left( 
			\frac{\alpha|t| + \log(1/\epsilon)}{\log\left(e + \log(1/\epsilon)/(\alpha|t|)\right)} 
			\right)
		\end{equation}
		times.
	\end{lemma}
		
		Finally, we obtain the complexity of our algorithm based on Lemma \ref{lem:hamiltonian complexity}, error estimates and repeated times discussed in section \ref{sec:computation cost}. 
		\begin{theorem}\label{thm:complexity}
			Assume that $\psi \in H^r(\bbR)$ is smooth enough, $\pi L$ is large enough and 
			$\triangle p = \mathcal{O}(\sqrt[r]{\epsilon})$ is small enough 
			to satisfy \eqref{eq:small error high order}
			with  $\epsilon$ the desired precision. 
			There exits a quantum algorithm that prepares an $\epsilon$-approximation of the state $\ket{\Bu}$ with 
			$\Omega(1)$ success probability and a flag indicating success, using 
			\begin{equation*}
				\tilde{\mathcal{O}}\bigg(e^{p^{\Diamond}} \frac{\|\Bu(0)\|}{\|\Bu(T)\|}\big(\frac{T \alpha_H}{\sqrt[r]{\epsilon}}+ \log  \frac{e^{p^{\Diamond}}\|\Bu(0)\|}{\epsilon^{1/r+1}\|\Bu(T)\|}\big)\bigg)
			\end{equation*}
			queries, where $\alpha_H \geq \|H_i\|,i=1,2$.
			\end{theorem}
			\begin{proof}
				Based on the Hamiltonian simulation, the state $\Bw^a/\eta_0$ can be prepared with $\eta_0 =\|\Bw_h(0)\|$, yielding the approximation state vector $\ket{\Bu^a(T)}$
				for the solution $\ket{\Bu(T)}$. The error between $\ket{\Bu}$ and $\ket{\Bu^a(T)}$ is obtained by 
				\begin{equation*}
					\|\ket{\Bu(T)}-\ket{\Bu^a(T)}\| \leq \|\ket{\Bu(T)} - \ket{\Bu_*}\| + \|\ket{\Bu_*} -\ket{\Bu^a(T)}\|,
				\end{equation*} 
				where $\Bu_* = e^{p_{k_*}}(\bra{k_*}\otimes I)\Bw_h$, $k_*\in \Ci_{\Diamond}$.
				From \eqref{eq:small error high order}, one has
				\[\|\ket{\Bu(T)} - \ket{\Bu_*}\| \leq 2 \|\Bu(T)-\Bu_*\|/\|\Bu(T)\| \leq \epsilon/2.\]
				Neglecting the error in the implementation of $\Cu(T)$, there holds 
				\begin{equation}
					\begin{aligned}
					\|\ket{\Bu_*}-\ket{\Bu^a(T)}\| \leq  2 \|\Bu_* - \Bu^a\|/\|\Bu_*\|
					\leq 2e^{p_k} \|\Bw_h - \Bw^a\|/\|\Bu_*\|.
					\end{aligned}
				\end{equation}
				To ensure the overall error remains within $\epsilon$, we require $\|\ket{\Bu_*} - \ket{\Bu^a}\|\leq \epsilon/2$, yielding
				\begin{equation}\label{eq:wh-wha}
					\|\ket{\Bw_h} - \ket{\Bw^a}\| \leq 
					2\frac{\|\Bw_h - \Bw^a\|}{\|\Bw_h\|} \leq 
					\frac{\epsilon e^{-{p_{k_*}}}\|\Bu_*\|}{2\eta_0}\approx \frac{\epsilon e^{-p_{k_*}} \|\Bu(T)\|}{\triangle p \|\Bu(0)\|}:=\delta.
				\end{equation}
				According to Lemma \ref{lem:hamiltonian complexity}, the assumption of mesh size and \eqref{eq:wh-wha},
				the time cost to implement $\Cu(T)$ is 
				$$ \tilde{\mathcal{O}}\left(T \|H^d\|+ \log \frac{1}{\delta} \right)
				=\tilde{\mathcal{O}}\left( \frac{T\alpha_H}{\sqrt[r]{\epsilon}} 
				+ \log \frac{e^{p^{\Diamond}} \|\Bu(0)\| }{ \epsilon^{1/r+1} \|\Bu(T)\|}\right).
				$$
				The proof is finished by multiplying the repeated times shown in \eqref{eq:repeated times}. 
				\end{proof}
			
		    \begin{remark}
		    	Since $r$ can be arbitrarily large and we use spectral methods to discretize in the $p$-direction,
                the dependence on $1/\epsilon$ can be reduced to a logarithmic level, i.e., 
                the query complexity approaches $\tilde{\mathcal{O}}\big(e^{p^{\Diamond}}\frac{\|\Bu(0)\|}{\|\Bu(T)\|}T \alpha_H\log \frac{1}{\epsilon} \big)$.
                Following the analysis in \cite{ACL23}, we derive the complexity of the continuous Fourier transform as
                $\tilde{\mathcal{O}}\big(e^{p^{\Diamond}} \frac{\|\Bu(0)\|}{\|\Bu(T)\|} T\alpha_H (\log\frac{1}{\epsilon})^{1/\beta}\big)$,
                where $\beta$ is a real parameter satisfying $0<\beta <1$.
		    	\end{remark}

	\section{The inhomogenous problem}
	\label{sec:the inhomogenous problem}
	In this section, we consider the problem with inhomogeneous source term, i.e. $\Bb(t)\neq \bm{0} $ in \eqref{eq:ODE}.
	Define $\tilde A\in \bbC^{2n\times 2n}$ by $\tilde A = \begin{bmatrix}
		A &B \\
		O &O
	\end{bmatrix}$ and 
	$H_1^{A} = (A+A^{\dagger})/2$, $H_2^A = (A-A^{\dagger})/(2i)$.
	There holds $\tilde A = \tilde H_1 +i \tilde H_2$, with  $\tilde H_1$ and $\tilde H_2$
	both Hermitian, where $\tilde H_1$ and $\tilde H_2$ are defined by 
	\begin{equation}\label{eq:large matrix H1}
		\tilde H_1 =  \begin{bmatrix}
			H_1^A   &\frac{B}{2}\\
			\frac{B^{\top}}{2}   & O
		\end{bmatrix},\quad 
		\tilde H_2 = \begin{bmatrix}
			H_2^A  &\frac{B}{2i}\\
			-\frac{B^{\top}}{2i} &O
		\end{bmatrix}.
	\end{equation}
	Apply the Schr\"odingerization to \eqref{eq: homo Au}, it leads  to 
	\begin{equation}\label{eq:w source term}
		\frac{\D}{\D t} \Bw_f 
		= - \tilde{H}_1\partial_p \Bw_f + i \tilde{H}_2 \Bw_f,\quad 
		\Bw_f(0) = \psi(p) \begin{bmatrix}
						\Bu_0\\
						\Br_0
					\end{bmatrix}.
	\end{equation}

	\subsection{The eigenvalues of $\tilde H_1$ impacted by the inhomogeneous term}
	Assume 
	$H_1^A$  and $\tilde H_1$ have $n$  and $2n$ real eigenvalues, respectively,  such that
	\begin{equation}\label{eq:eigenvalue of H1A and tilde H1}
		\lambda_1( H_1^A)\leq \lambda_2(H_1^A)\leq \cdots \leq 
		\lambda_{n}(H_1^A),\quad
		\lambda_1(\tilde H_1)\leq \lambda_2(\tilde H_1)\leq \cdots \leq \lambda_{2n}(\tilde H_1),
	\end{equation}
	for all $t\in [0,T]$.
	
	\begin{lemma}\label{lem:eigenvalues of H1}
		Assume  $\Bb \neq \textbf{0}$, then there exit both positive and negative eigenvalues of $\tilde H_1$.
	\end{lemma}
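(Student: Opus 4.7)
The plan is to exhibit an explicit $2\times 2$ principal submatrix of $\tilde H_1$ that is indefinite, and then invoke Cauchy's interlacing theorem to conclude that $\tilde H_1$ itself has eigenvalues of both signs.

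First, because $\Bb\neq \textbf{0}$, the diagonal matrix $B=\text{diag}\{b_1,\dots,b_n\}$ has at least one nonzero entry; fix an index $i$ with $b_i\neq 0$. Extracting the rows and columns of $\tilde H_1$ indexed by $i$ and $n+i$ from the block form in \eqref{eq:large matrix H1}, one obtains the $2\times 2$ Hermitian principal submatrix
$$M_i=\begin{bmatrix}(H_1^A)_{ii} & b_i/2\\ \overline{b_i}/2 & 0\end{bmatrix}.$$
A direct computation gives $\det(M_i)=-|b_i|^2/4<0$. Since $M_i$ is Hermitian with negative determinant, its two real eigenvalues $\mu_1,\mu_2$ are of opposite sign, that is $\mu_1<0<\mu_2$.

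Next, I apply the Cauchy interlacing theorem between the $2n\times 2n$ Hermitian matrix $\tilde H_1$ and its $2\times 2$ principal submatrix $M_i$: the extremal eigenvalues satisfy $\lambda_1(\tilde H_1)\le \mu_1$ and $\lambda_{2n}(\tilde H_1)\ge \mu_2$. Combined with $\mu_1<0<\mu_2$, this yields $\lambda_1(\tilde H_1)<0<\lambda_{2n}(\tilde H_1)$, which is exactly the claim. Equivalently, one could avoid interlacing and argue variationally: for the vector $v_t=t e^{i\arg(b_i)}e_i+e_{n+i}$, the Rayleigh quotient evaluates to $t^2(H_1^A)_{ii}+t|b_i|$, whose sign flips with the sign of small $t$, so that $v_t^\dagger\tilde H_1 v_t$ is strictly positive for some unit vector and strictly negative for another.

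I expect no serious obstacle: the whole argument is driven by the vanishing $(2,2)$-block of $\tilde H_1$, which forces the $2\times 2$ determinant above to be strictly negative as soon as the forcing $\Bb$ is nonzero. The only minor point to verify is that the off-diagonal block of $\tilde H_1$ is consistent with Hermicity, so that the entries of $M_i$ are $b_i/2$ and $\overline{b_i}/2$ and hence $\det(M_i)=-|b_i|^2/4$ regardless of whether $b_i$ is real or complex.
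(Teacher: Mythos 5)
Your proof is correct, but it follows a genuinely different and noticeably leaner route than the paper's. The paper proves the existence of a negative eigenvalue via the Poincar\'e separation theorem applied to $V_1=[I\;\ \textbf{0}]^{\top}$, which requires $\operatorname{tr}(H_1^A)<0$ (i.e.\ it leans on the standing stability assumption on $H_1^A$); for the positive eigenvalue it combines the separation theorem with a rank argument, which forces a case split on whether $H_1^A$ is invertible and, in the singular case, a perturbation $H_1^A-\delta I$ together with the monotonicity (Weyl) theorem. Your argument is entirely local: the zero lower-right block of $\tilde H_1$ forces the $2\times 2$ principal submatrix indexed by $\{i,\,n+i\}$ to have determinant $-|b_i|^2/4<0$ whenever $b_i\neq 0$, and Cauchy interlacing (with $j=1$ and $j=k=2$, giving $\lambda_1(\tilde H_1)\le\mu_1<0<\mu_2\le\lambda_{2n}(\tilde H_1)$) or equivalently your Rayleigh-quotient computation $v_t^{\dagger}\tilde H_1 v_t=t^2(H_1^A)_{ii}+t|b_i|$ finishes the proof in one stroke. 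This buys you three things the paper's proof does not have: no hypothesis on $H_1^A$ whatsoever (not even $\operatorname{tr}(H_1^A)<0$), no invertibility case analysis, and explicit eigenvalue bounds $\mu_{1,2}=\bigl((H_1^A)_{ii}\pm\sqrt{(H_1^A)_{ii}^2+|b_i|^2}\,\bigr)/2$ that are consistent with the $|b|/2$ perturbation bound of the subsequent lemma. Your care with Hermiticity of the off-diagonal block ($b_i/2$ versus $\overline{b_i}/2$) is also warranted, since the paper's display \eqref{eq:large matrix H1} implicitly assumes $B$ real.
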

	\begin{proof}
		We first consider the case that $H_1^A$ is invertible. After elementary row and column operations, one has 
		\begin{equation}\label{eq:eigenvalue of tilde HRC}
			\begin{bmatrix}
				I   &O\\
				-\frac{B^{\top}(H_1^A)^{-1}}{2}&I
			\end{bmatrix}
			\tilde H_1
			\begin{bmatrix}
				I   &\frac{(H_1^A)^{-\top}B}{2}\\
				O   &I
			\end{bmatrix}=
			\begin{bmatrix}
				H_1^A & O\\
			O & -\frac{B^{\top} (H_1^A)^{-1}B}{4}
			\end{bmatrix} = \tilde H_1^{RC}.
		\end{equation}
		According to \eqref{eq:eigenvalue of tilde HRC}, it follows that $\lambda(\tilde H_1^{RC}) = \{\lambda(H_1^A),-\lambda(B^{\top} (H_1^A)^{-1} B)/4\}$ have both positive and negative values.
		Since the matrix $\tilde H_1$ has the same number of positive eigenvalues and the same number of negative eigenvalues as the matrix $\tilde H_1^{RC}$, the statement is proved.
		
		Next we consider the case that $ H_1^A$ is not invertible.
		The following proof is from eigenvalue inequalities for Hermitian matrices \cite[Corollary 4.3.15]{Horn12}, that is  for any  Hermitian matrices $H_A, H_B \in \bbC^{n,n}$, then 
		\begin{equation}\label{eq:eigenvalues of Hermitian}
			\lambda_i(H_A)+\lambda_1(H_B)\leq \lambda_i(H_A+H_B)\leq \lambda_i(H_A)+\lambda_n (H_B),\quad 
			i=1,\cdots,n.
		\end{equation}
		Let $H^*_N = H_1^A - \delta_0 I$, with $\delta_0 = 2 \max\{|\lambda(H_1^A)|\}$. 
		Then $H^*_N$ is negative. By letting $\tilde H^*_N = \begin{bmatrix}
			H^*_N &B/2\\
			B^{\top}/2 &O
		\end{bmatrix}$, it follows immediately from what we have proved that 
		there exits $i_0$ such that $\lambda_{i_0}(\tilde H^*_N)>0$. From \eqref{eq:eigenvalues of Hermitian}, it is easy to see that 
		$\lambda_{i_0}(\tilde H_1) \geq \lambda_{i_0}(\tilde H^*_N)>0$. Similarly, there exits $j_0$ such that 
		$\lambda_{j_0}(\tilde H_1) \leq \lambda_{j_0}(\tilde H^*_P)<0$, where $\tilde H^*_P = \begin{bmatrix}
			H^*_P &B/2\\
			B^{\top}/2 &O
		\end{bmatrix} $ with $H^*_P = H_1^A +\delta_0 I$.
	\end{proof}
	\begin{lemma}\label{lem:bound of lambda}
		The eigenvalues of $\tilde H_1$ satisfy
		\begin{equation}
			|\lambda_{i_k}(\tilde{H}_1)- \lambda_k(H_1^A) |\leq \frac{|b|}{2} 
			\quad \text{and}\quad 
			|\lambda_{j_k}(\tilde H_1)|\leq \frac{|b|}{2}, \quad 1\leq i_k\leq 2n, \quad 1\leq k\leq n,
		\end{equation}
		where $|b| = \max\limits_{t}\|\Bb\|_{l^{\infty}}$, and the indices satisfy  $j_k \in \{1,2,\cdots,2n\}\backslash (\{i_k\}_{k=1}^n)$.
	\end{lemma}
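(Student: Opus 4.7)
The plan is to write $\tilde H_1$ as a Hermitian perturbation of a block-diagonal matrix and then invoke Weyl's inequality. Specifically, I would set
\begin{equation*}
\tilde H_1 = D + E, \qquad D := \begin{bmatrix} H_1^A & \textbf{0} \\ \textbf{0} & \textbf{0} \end{bmatrix}, \qquad E := \begin{bmatrix} \textbf{0} & B/2 \\ B/2 & \textbf{0} \end{bmatrix},
\end{equation*}
both of which are Hermitian. The spectrum of the block-diagonal piece $D$ is simply the union of the spectrum of $H_1^A$ with $n$ copies of $0$, and the whole argument reduces to controlling $\|E\|_2$.

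The first substep is to show $\|E\|_2 \leq |b|/2$. Because $B = \text{diag}\{b_1,\dots,b_n\}$ is diagonal, $E$ is block anti-diagonal with a diagonal off-diagonal block, so its spectrum decomposes coordinatewise: on each two-dimensional invariant subspace spanned by $\ket{i}\otimes\ket{0}\pm\ket{i}\otimes\ket{1}$ the action of $E$ is multiplication by $\pm b_i/2$. Hence $\|E\|_2=\max_i|b_i|/2 \leq |b|/2$ after taking the supremum in $t$.

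The second substep is to invoke Weyl's inequality for Hermitian matrices \cite{Horn12}, which yields $|\lambda_k(\tilde H_1)-\lambda_k(D)|\leq \|E\|_2 \leq |b|/2$ for every index $k \in \{1,\dots,2n\}$. Under the canonical stability setting $\lambda_i(H_1^A)\leq 0$ used throughout the Schr\"odingerisation framework, the sorted spectrum of $D$ reads
\begin{equation*}
\lambda_i(D) = \lambda_i(H_1^A) \ (i\leq n), \qquad \lambda_{n+i}(D) = 0 \ (i\leq n),
\end{equation*}
so Weyl's bound immediately produces both inequalities claimed in the lemma.

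The only subtle point, and the place where one has to be a little careful, is the sorted matching of eigenvalues of $D$ with those of $H_1^A$: the pairing above relies on $\lambda_i(H_1^A)\leq 0$ so that the $n$ zeros of $D$ sit above all $\lambda_i(H_1^A)$ in the ordering. If some $\lambda_i(H_1^A)$ were positive, the zero eigenvalues of $D$ would interleave with those of $H_1^A$, and one would then reinterpret the statement via the merged sorted list $\{\lambda_i(H_1^A)\}\cup\{0,\dots,0\}$; Weyl's theorem still delivers the same $|b|/2$ perturbation bound in that generality, so the analytic core of the argument is unchanged.
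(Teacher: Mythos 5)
Your proof is correct and follows essentially the same route as the paper: the paper uses exactly the decomposition $\tilde H_1=\tilde H_{11}+\tilde H_{12}$ with the same block-diagonal and block-anti-diagonal pieces, observes that the extreme eigenvalues of the perturbation are $\pm|b|/2$, and invokes the Weyl-type eigenvalue inequality from \cite[Corollary 4.3.15]{Horn12}. Your extra remark about the sorted matching of the $n$ zero eigenvalues of $D$ with the spectrum of $H_1^A$ (valid under the standing assumption $\lambda_i(H_1^A)\leq 0$) is a point the paper's proof leaves implicit, so it is a welcome clarification rather than a deviation.
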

	\begin{proof}
		The result will be proved from \eqref{eq:eigenvalues of Hermitian}. 
		We complete the proof by observing that
		\begin{equation}
			\tilde H_1 = \tilde H_{11} +\tilde H_{12}=\begin{bmatrix}
				H_1^A &O\\
				O & O
			\end{bmatrix}
			+\begin{bmatrix}
				O & \frac{B}{2}\\
				\frac{B^{\top}}{2} &O
			\end{bmatrix},
		\end{equation}
		with $|b|/2$ and $-|b|/2$ the maximum and minimum eigenvalues of $\tilde H_{12}$.
	\end{proof}
	
	\subsection{Turning an inhomogeneous system into a homogeneous one}

	From Lemma~\ref{lem:eigenvalues of H1}, it can be seen that the presence  of the source term will destroy the negative definiteness of matrix $H_1^A$.
	Then we alleviate  this defect by rescaling the auxiliary vector $\Br$ with $\Br /\gamma$, where 
	\begin{equation}\label{eq:gamma}
		\gamma=1/\big(C_T|b|\big), \quad 
		|b| = \max\limits_{t}\|\Bb\|_{l^{\infty}}.
	\end{equation}
	Here, $C_T$ is a parameter that satisfies $C_T = \mathcal{O}(1)$ and is larger than the evolution time. If the evolution time $T$ is known, we can set $C_T = T$.
	The new linear system is 
	\begin{equation}\label{eq:varepsilon A}
		\frac{\D}{\D t} \Bu_f^{\gamma} 
		= A^{\gamma} \Bu_f, \quad 
		A_f  =  \begin{bmatrix}
			A & \gamma B\\
			O &O
		\end{bmatrix},\quad 
	\Bu_f(0) = \begin{bmatrix}
		\Bu_0 \\
		\Br_0 /\gamma
	\end{bmatrix}. 
	\end{equation} 
	Define 
	\begin{equation*}
		H_1^{\gamma} = \begin{bmatrix}
			H_1^A & \frac{\gamma B}{2}\\
			\frac{\gamma B^{\top}}{2} &O
		\end{bmatrix},\quad 
		H_2^{\gamma}=
		\begin{bmatrix}
			H_2^A & \frac{\gamma B}{2i}\\
			-\frac{\gamma B^{\top}}{2i} &O
		\end{bmatrix}.
	\end{equation*}
	Using the Schr\"odingerization method, one gets
	\begin{equation}\label{eq:up source term gamma}
		\frac{\D}{\D t} \Bw_f^{\gamma} = 
		-H_1^{\gamma} \partial_p \Bw_f^{\gamma} 
		+i H_2^{\gamma} \Bw_f^{\gamma},\quad 
		\Bw_f^{\gamma}(0) =e^{-|p|}\begin{bmatrix}
		\Bu_0\\
		\Br_0/\gamma
		\end{bmatrix}.
	\end{equation}
	From Theorem~\ref{thm:recovery u}, one could choose 
	$p>p^{\gamma,\Diamond}$ to recover $\Bu_f^{\gamma}$ from $\Bw_f^{\gamma}$, where 
	\begin{equation}\label{eq:choice region}
		p^{\gamma,\Diamond} =p^{\diamond}+\mathcal{O}(\gamma |b|T)=\lambda_{\max}^+(H_1^A)T+ \mathcal{O}(1).
	\end{equation}
	Then, we arrive at the recovery theorem for the inhomogeneous case.
	\begin{theorem}
				Assume the eigenvalues of $H_1^A$ satisfy \eqref{eq:eigenvalue of H1A and tilde H1} and $\gamma$ satisfies \eqref{eq:gamma}, we have
				\begin{equation*}
				    \Bu = e^p(\bra{0}\otimes I)\Bw_f^{\gamma},\quad p>p^{\gamma,\Diamond}, 
					\end{equation*}
				where $p^{\gamma,\diamond}$ is given in \eqref{eq:choice region}, or use the integration to get
				\begin{equation*}
					\Bu = e^p (\bra{0}\otimes I) \int_{p}^{\infty}\Bw_f^{\gamma}(q)dq,\quad 
					p>p^{\gamma,\Diamond}.
				\end{equation*}
		\end{theorem}

	\subsubsection{Discretization of the Schr\"odingerized system}
	Applying the discrete Fourier transform to \eqref{eq:up source term gamma} gives
	\begin{equation}\label{eq:shro dis source}
		\frac{\D}{\D t}
		\tilde \Bw_{f,h}^{\gamma}
		= -i \big(D_p \otimes H_1^{\gamma} - I\otimes H_2^{\gamma} \big)
		\tilde{\Bw}_{f,h}^{\gamma},\quad 		
		\tilde{\Bw}_{f,h}^{\gamma} = 
		(\Phi^{-1}\otimes I)(\psi_h \otimes \begin{bmatrix}
			\Bu_0 \\
			C_T|b|\Br_0
		\end{bmatrix}),
	\end{equation}
	where  $\psi_h = \sum_{k\in [N_p]} e^{-|p_k|}\ket{k}$ and $p_k = -\pi L + 2k\pi L/N_p$. Here $\pi L$ is large enough to  satisfy 
	\begin{equation}\label{eq:pi L inhomo}
		e^{-\pi L + 2\lambda_{\max}^+(H_1^A)T+R+1}\leq \epsilon,\quad
		e^{-\pi L +\lambda_{\max}^-(H_1^A)T+\lambda_{\max}^+(H_1^A)+R+1}\leq \epsilon,
	\end{equation}
	with $R\geq 1$ and $e^R=\mathcal{O}(1)$.
	Then, the Hamiltonian simulation can be performed.

    According to Theorem \ref{thm:err of dis}, we get the error estimate of system in  \eqref{eq:shro dis source}.
	\begin{corollary}
		Assume $\pi L$ is large enough and satisfies \eqref{eq:pi L inhomo}. The approximation $\Bu_h^d$ is defined by 
		\[\Bu_h^d(T,p) = e^p \sum_{|k|\leq N_p/2}(\bra{j_k}\otimes I) \tilde{\Bw}_{f,h}^{\gamma} e^{ik(p/L+\pi)}\quad p\in \tilde{\Omega}_p,\]
		where $j_k = -k+\frac{N_p}{2}$ and $\tilde{\Omega}_p = (p^{\gamma,\Diamond},p^{\gamma,\Diamond}+R)$. Then, it yields 
		\[ \|\Bu_h^d - \Bu\|_{L^2(\tilde\Omega_p)}\lesssim \triangle p  e^{p^{\Diamond}} \big(\|\Bu(T)\| +C_T\||\Bb|\|\big)  + \epsilon \big(\|\Bu_0\| + C_T\||\Bb|\|\big),\]
		where $\||\Bb|\| = |b|\|\Br_0\|.$
	\end{corollary}

	Since $\Bu_f^{\gamma} = \begin{bmatrix}
		\Bu \\ \Br/\gamma
	\end{bmatrix}$, one can perform a projection to get $\ket{\Bu(T)}$ with the probability
	$\frac{\|\Bu(T)\|^2}{\|\Bu_f^{\gamma}(T)\|^2}$. According to the analysis of the computation cost for the measurement in section \ref{sec:computation cost}, the overall probability for getting $\ket{\Bu(T)}$ is approximated by 
	\begin{equation*}
		P_u = \frac{C_{e0,\gamma}^2}{C_{e,\gamma}^2}\frac{\|\Bu(T)\|^2}{\|\Bu_f^{\gamma}(0)\|^2}
		= \frac{C_{e0,\gamma}^2}{C_{e,\gamma}^2}\frac{\|\Bu(T)\|^2}{\|\Bu(0)\|^2 + C_T^2|\|\Bb\||^2},
	\end{equation*}
	where $|\|\Bb\||=|b|\|\Br_0\|$, and $\frac{C_{e0,\gamma}}{C_{e,\gamma}} \approx  \sqrt{\frac{(1-e^{-2})e^{-2p^{\gamma,\Diamond}}}{2}} \geq 0.65e^{-p^{\gamma,\Diamond}}$.  By using the amplitude  amplification, the repeated times for the measurements can be approximated as 
	\begin{equation}\label{eq:inhomo repeated times}
		g \approx e^{p^{\Diamond}+1} \frac{\sqrt{\|\Bu(0)\|^2 + C_T^2|\|\Bb\||^2}}{\|\Bu(T)\|}.
	\end{equation}
	As shown in Theorem \ref{thm:complexity}, for smooth initial data and with $\gamma$ selected as in \eqref{eq:gamma}, the Hamiltonian simulation of \eqref{eq:shro dis source} achieves a query complexity of nearly 
	$ \tilde{\mathcal{O}}(e^{p^{\Diamond}} \frac{\sqrt{\|\Bu(0)\|^2 + C_T^2|\|\Bb\||^2}}{\|\Bu(T)\|} (T\alpha_H \log\frac{1}{\epsilon}))$, where $\alpha_H \geq \|H_i^A\|$, $i=1,2$.  
	
	By using the Fourier transform in $p$ of $\Bw_f^{\gamma}$, it yields
	\begin{equation}\label{eq:hat w y}
		\frac{\D}{\D t} \hat \Bw_f^{\gamma} 
		=i(\xi H_1^{\gamma} + H_2^{\gamma})
		\hat \Bw_f^{\gamma} =  i H^{\gamma} \hat{\Bw}_f^{\gamma},
	\end{equation}
	where $\hat\Bw_f^{\gamma} = \mathscr{F}(\Bw_f^{\gamma})$. Here 
	$ H^{\gamma}(\xi)$ is Hermitian for any $\xi \in \bbR$. 
	From Lemma~\ref{lem:bound of lambda},  one has $\lambda(H^{\gamma}_1) = \lambda(H_1^A) +\mathcal{O}(1)$.
	
	By applying the continuous Fourier transform to the Schr\"odingerization framework (see \eqref{eq:hat w y}), we truncate the $\xi$-domain to $[-X, X]$ for numerical implementation, yielding 
	\begin{equation}\label{eq:check w source term}
		\frac{\D}{\D t} \check{\Bw}_{f,h}^{\gamma} = i (D_{\xi} \otimes H_1^{\gamma} + I\otimes H_2^{\gamma}) \check{\Bw}_{f,h}^{\gamma},\quad 
		\check{\Bw}_{f,h}^{\gamma} = \bm{\xi}_h  \otimes
				\begin{bmatrix}
						\Bu_0\\
						C_T|b|\Br_0
				\end{bmatrix},
	\end{equation}
	where $D_{\xi}$ and $\bm{\xi}_h$ are defined in the same way as in  \eqref{eq:schro_conti_computer}. Following Theorem~\ref{thm:err of conti}, one gets the  error estimate for \eqref{eq:check w source term}.
	\begin{corollary}\label{thm:err shcr conti source term}
		Assume $\Bw_h^c$ is obtained by \eqref{eq:whc} with the solution to \eqref{eq:check w source term}, and $\Bu_h^c = e^p \Bw_h^c$. 
		Additionally, $X$ is large enough to satisfy \eqref{eq:require X}.
		Then there holds 
		\begin{align*}
			\|\Bu_h^c - \Bu\|_{L^2(\tilde\Omega_p)}\lesssim \big(\epsilon+\frac{1}{\epsilon}\triangle \xi^2 ((p^{\Diamond})^2+\|\int_0^T (H_1^{A}(s)ds\|^2)\big)e^{p^{\Diamond}}(\|\Bu_0\|+C_T\||\Bb|\|),
		\end{align*}
		where $|\|\Bb\||=|b|\|\Br_0\|$ and  $\tilde{\Omega}_p = (p^{\gamma,\Diamond},p^{\gamma,\Diamond}+R)$.
	\end{corollary}
	
	 \begin{remark}
	    Our algorithm by using \eqref{eq:shro dis source} focuses on direct quantum state preparation
	    of
	    $$ \Bu_* = e^{p_{k_*}}(\bra{k_*}\otimes \bra{0}\otimes I) \Bw_{f,h}^{\gamma},$$
	    with $\Bw_{f,h}^{\gamma} = (\Phi^{-1} \otimes I)\tilde{\Bw}_{f,h}^{\gamma}$, enabling the computation of a wide range of
	    physical quantities.
	    To estimate the quantum observable \(\bra{\bm{u}_*} S \ket{\bm{u}_*}\) associated with the quadratic form \(\bm{u}^\dagger S \bm{u}\), we proceed as follows. Given access to a block-encoding \(U_S\) of the operator \(S\), both the real and imaginary components of \(\bm{u}_*^\dagger S \bm{u}_*\) can be efficiently computed. Specifically, these quantities are obtained via the Hadamard test technique \cite{TAWL21}, where the real component corresponds to the standard protocol and the imaginary part is extracted by introducing a \(\pi/2\)-phase shift in the ancilla qubit rotation.
	    To get the quantum observable after computing \eqref{eq:check w source term},
	    the process can be found in \cite{ALL}.
	\end{remark}
	\section{Numerical tests}
	\label{sec:numerical tests}
	
	In this section, we use several examples to verify our theory and show the correctness of the mathematical formulation.  All of the numerical tests
	are performed in the classical computers by using Crank-Nicolson method for temporal discretization.

   	\subsection{Recovery from Schr\"odingerization}
   
   In this test, we use the following  scattering model-like problem in $[0,2]$ to test the recovery from Schr\"odingerization,
   \begin{equation}
   	\partial_t u = \Delta u +k^2u,\quad u(0) =\sin(\pi x),
   \end{equation}
   with zero boundary conditions.  The exact solution is   $u = e^{(k^2-\pi^2)t} \sin(\pi x)$, and we use $k=4$. The  spatial discretization is given by the  finite difference method, 
   \begin{equation*}
   	\frac{\D}{\D t}\Bu = A\Bu, \quad 
   	A = \begin{bmatrix}
   		\alpha &\beta & & & &\\
   		\beta &\alpha & \beta & & &\\
   		&\ddots  &\ddots  &\ddots  &\\
   		& &\beta  & \alpha & \beta\\
   		& & & \beta & \alpha
   	\end{bmatrix},\quad 
   	\alpha = -\frac{2}{h^2}+k^2,\;
   	\beta = \frac{1}{h^2}.
   \end{equation*}
    The computation stops at $T=1$.
   One can check that the eigenvalues of $A$ are $\lambda_j(A) = \frac{1}{h^2}(-2+2\cos(\frac{j\pi}{n+1})) +k^2$, $j=1,\cdots,n$.
   It can be seen that the eigenvalue of $A$ may be positive  when  $k$ is large.
   In this test, we observe $\lambda_{\max}^+(A)\approx 6$ when $h=1/2^5$ numerically.
   From  the plot on the left in Fig.~\ref{fig:recovery of u}, it can be seen that the error between $\Bu$ and $\Bw_h^c e^{p}$ drops precipitously at $p^{\Diamond}=\lambda_{\max}^+(H_1)T\approx 6$.
   Here $\Bw_h^c$ is computed using \eqref{eq:schro_conti_computer}, with $\xi$ truncated to the interval $(-80,80)$, $\triangle \xi = \frac{5}{2^4}$ and $\triangle t = \frac{1}{2^5}$.
   According to Theorem~\ref{thm:recovery u}, we should choose $p>p^{\Diamond}\approx 6$ to recover $\Bu$.  The results are shown on the right of Fig.~\ref{fig:recovery of u},
   where the numerical solutions  recovered by selecting a single point or numerical integration are close to the exact solutions.

   \begin{figure}[t!]
   	\includegraphics[width=0.45\linewidth]{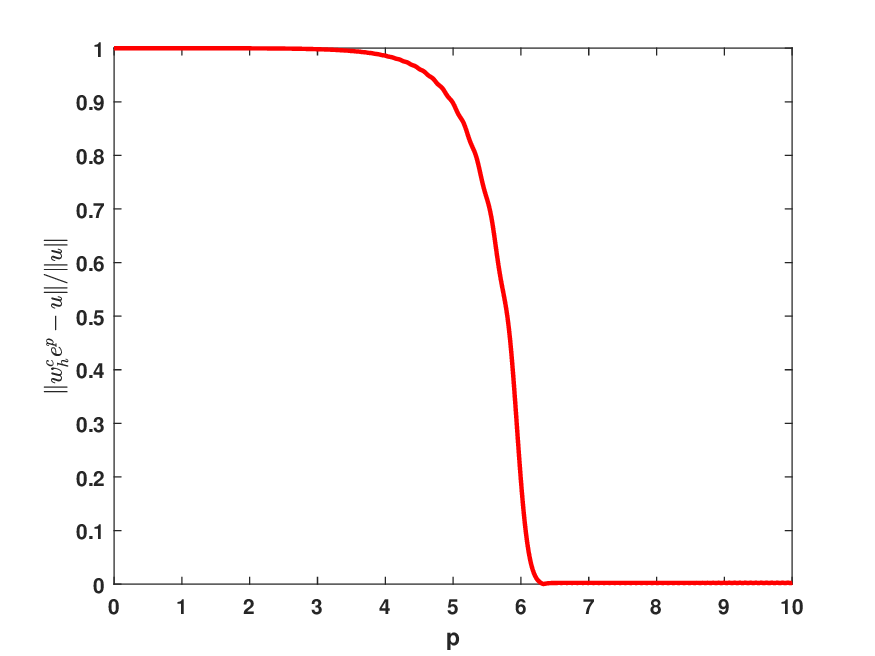}
   	\includegraphics[width=0.45\linewidth]{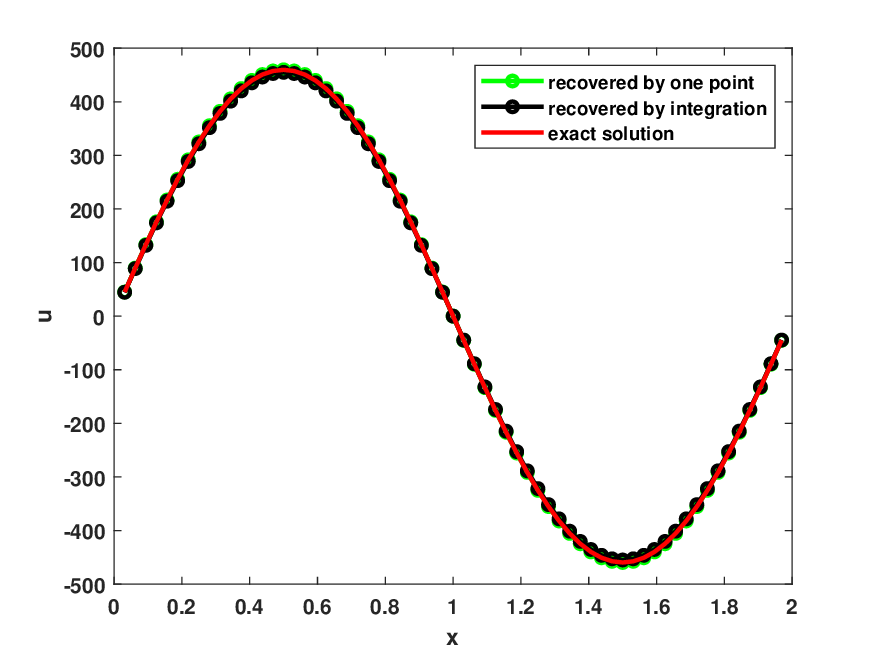}
   	\caption{ Left: error of $\|\Bu - \Bw_h^c(p)e^{p}\|/\|\Bu\|$
   		with respect to $p$ with $\Bw_h^c$ computed by \eqref{eq:whc}.  Right: the recovery from Schr\"odingerization by choosing $p>p^{\diamond} = \lambda_n(H_1)  T\approx 6$.	
   	}\label{fig:recovery of u}
   \end{figure}
	
	\subsection{Recovery from Schr\"odingerization for ill-posed problems}
	In this test, we use the backward heat equation in $[0,2]$, with the Dirichlet boundary condition $u(t,0)=u(t,2)=0$, to test the recovery from Schr\"odingerization,
	\begin{equation*}
		\partial_t u = -\Delta u, \quad u(0,x) = \exp(-25\pi^2) \sin(\frac{\pi}{2}x).
	\end{equation*}
	The exact solution is given by 
	$	u(t,x) = \exp(\frac{\pi^2}{4}t-25\pi^2)\sin(\frac{\pi}{2}x) $.
    It is well known that the backward heat equation is unstable, which is hard to simulate. 
	Applying the Schr\"odingerization yields 
	\begin{equation}\label{w-heat}
		\frac{\D}{\D t} w =  \Delta \partial_p w,\quad w(0,x,p) = e^{-|p|}u(0,x).  
	\end{equation}
	By using the Fourier transform technique to \eqref{w-heat} with respect to the variables $p$ and $x$, which correspond to Fourier modes $\xi$ and $\eta$, respectively, one gets
	the Fourier transform $\hat w(t,\eta,\xi)$ of the exact solution $w(t,x,p)$ of problem~\eqref{w-heat} satisfying
	\begin{equation*}
		\frac{\D}{\D t} \hat w  = -i\xi\eta^2 \hat{w},\quad 
		\hat w(0) = \hat w_0  = \frac{\hat u_0(\eta)}{\pi (1+\xi^2)},
	\end{equation*}
	where $\hat{u}_0(\eta) = \mathscr{F}(u_0) = \frac{1}{2\pi} \int_{\bbR} e^{i x \eta} u(0,x)dx$.
	The solution of $w$ is then found to be 
	\begin{equation*}
		\begin{aligned}
	   w(T,x,p)= \frac{1}{2i}\int_{\bbR} e^{-|p-\eta^2 T|} e^{-ix\eta-25\pi^2 }
	 \big(\delta(\eta+\frac{\pi}{2})- \delta(\eta-\frac{\pi}{2}) \big) d\eta.
	 \end{aligned}
	\end{equation*}
	Since $\hat{u}_0$ has a compact support, namely $\hat{u}_0 = 0$ for $|\eta|\geq \frac{\pi}{2}$. To recover $u(T,x)$, we need to choose $p>\frac{\pi^2}{4}T$ , resulting in 
	\begin{equation*}
		\begin{aligned}
		u(T,x) &= e^p \int_{- \frac{\pi}{2}}^{\frac{\pi}{2}}  e^{-(p-\eta^2 T)} \hat u_0 e^{-ix\eta} d\eta =\int_{\bbR} e^{\eta^2 T} \hat{u}_0 e^{-ix\eta}d \eta,	 \quad p>\frac{\pi^2}{4}T.
		\end{aligned}
	\end{equation*}
	
	The computation stops at $T=100$, which represents a relatively long duration for an unstable system. 
    In this work, we apply the Schr\"odingerization method to partial differential equations, followed by spatial discretization with $p\in \Omega_p=(-257, 257)$,  $\triangle x = \frac{1}{2^5}$, $\triangle p =\frac{257}{2^{9}}$ and $\triangle t = \frac{25}{2^{10}}$. This method fully exploits the exponential decay property in Fourier space.  The magnitude of  $p^{\Diamond}$ is $\mathcal{O}(T)$. For more details on selecting $p^{\Diamond}$ to recover the solution of the ill-posed system and the corresponding error analysis, we refer to \cite{JLM24SchrBackward}. The results are shown on the right side of Fig.~\ref{fig:recovery of u_illposed}, where the numerical solutions recovered either by selecting a single point or through numerical integration are close to the exact solution. The error $\|\Bu - \Bw_h^d(p)e^{p}\|$ with respect to $p$ is illustrated on the left side of Fig. \ref{fig:recovery of u_illposed}, demonstrating that by appropriately choosing the interval  $(p^{\Diamond}, p^{\Diamond}+R)$ with $e^R=\mathcal{O}(1)$ and $p^{\Diamond}=\pi^2T/4$, the target solution can be effectively recovered.

Classical methods with traditional regularization techniques, such as filtering or other approaches \cite{filterBHE, Scheme_BHE}, require careful parameter selection: too small leads to instability, too large results in excessive smoothing and loss of details. 
In contrast, 
Schrödingerization, as guaranteed by Theorem \ref{thm:recovery u}, provides accurate (modulus truncation of high-frequency modes of the initial data) approximations with precise error estimates from the perspective of continuous functions.
While similar to Fourier regularization \cite{FourierBHE}, our method is simpler to implement. A detailed comparison is provided in \cite{JLM24SchrBackward}.
	
	\begin{figure}[t!]
		\includegraphics[width=0.45\linewidth]{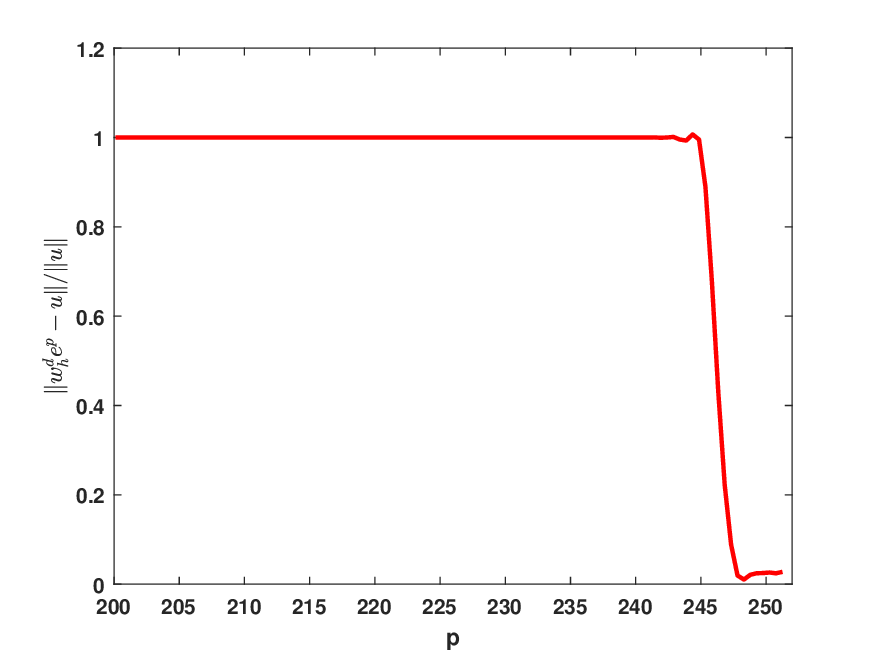}
		\includegraphics[width=0.45\linewidth]{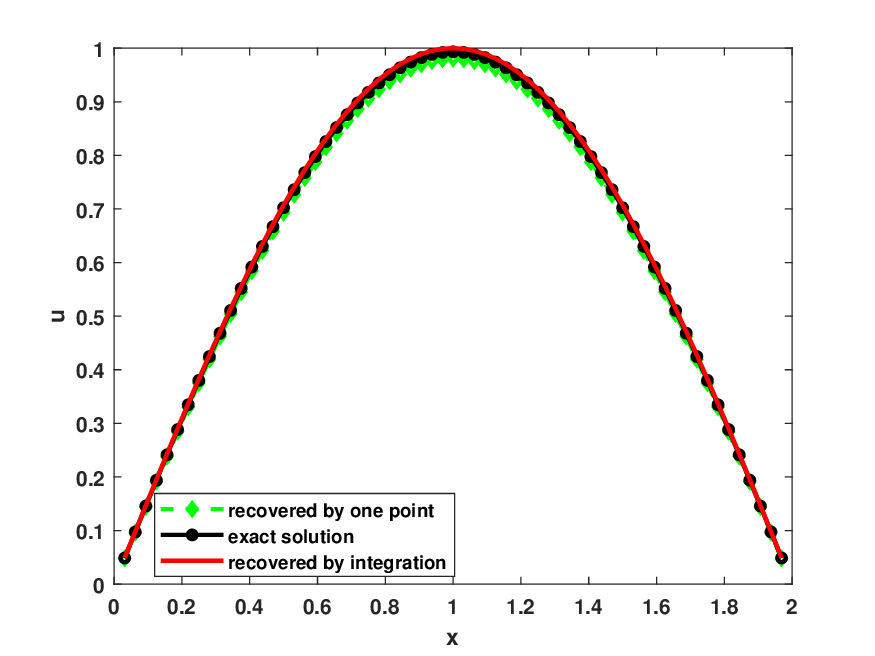}
		\caption{ Left: error of $\|\Bu - \Bw_h^d(p)e^{p}\|$
			with respect to $p$ with $\Bw_h^d$ defined in \eqref{eq:whd(t,x,p)}.  Right: the recovery from Schr\"odingerization by choosing $p>p^{\diamond} = \frac{\pi^2}{4}T\approx 247$.	
		}\label{fig:recovery of u_illposed}
	\end{figure}
	
	\subsection{Convergence rates of the discretization for Schr\"odingerized systems}
	
	In these tests, we  use the numerical simulation to test the convergence rates of the discretization of Schr\"odingerization and compare different discretization schemes for the Schr\"odingerized equations.
	
	First, we consider a 1-D case for Maxwell's equations.
	For the three-dimensional case, a similar approach can be adopted straightforwardly \cite{JLM23}. The electric field is assumed to have a transverse component $E_y$, i.e. $\BE =(0,E_y(x,t),0)$. The magnetic field is aligned with the z direction and its magnitude is denoted by $B_z$, i.e. $\BB=(0,0,B_z(x,t))$. The reduced Maxwell system with periodic boundary conditions is written as
	\begin{equation}\label{eq:maxwell tests}
		\partial_t E_y + \partial_x B_z =-J_y,\quad 
		\partial_t B_z + \partial_x E_y = 0,\quad \text{in}\;[0,1].
	\end{equation}
  The simulation stops at $T=1$.
	\subsubsection{An in-homogeneous system with source terms}
	\label{subsub1}
	The source term is defined as
    $$ J_y(t,x) = -2\pi t \cos(2\pi x).$$
	The initial conditions are prescribed as follows
	$$E_y(0,x) = \cos(2\pi x)/(2\pi)-1/(2\pi), \quad 
	B_z(0,x) = 0.$$
	The exact solution to the system is given by
	\begin{equation*}
		E_y = \cos(2\pi x)/(2\pi)-1/(2\pi),\quad 
		B_z = t\sin(2\pi x).
	\end{equation*}
	Yee's scheme \cite{TafYee00} for spatial dicretization gives
	\begin{equation*}
		\frac{\D}{\D t} \begin{bmatrix}
			\Bu\\
			\Br
		\end{bmatrix}
		=\begin{bmatrix}
			A &B\\
		O &O
		\end{bmatrix}\begin{bmatrix}
			\Bu\\
			\Br
		\end{bmatrix},
	\end{equation*}
	where $\Bu = \sum\limits_{i=0}^{n-2} E_{i+1}\ket{i} +\sum\limits_{i=0}^{n-1} B_{i+\frac 12}\ket{n-1+i}$ and $\Br = \sum\limits_{i=0}^{n-2} \ket{i}$. The matrix $A\in \bbR^{2n-1,2n-1}$ and $B\in \bbR^{2n-1,n-1}$ are defined by 
	\begin{equation*}
		A=\begin{bmatrix}
			O  &-D_x   \\
			D_x^{\top} &O 
		\end{bmatrix},\quad 
		B = \begin{bmatrix}
			-J_y \\
			\zerobf 
		\end{bmatrix}, \quad 
		D_{x} =  \frac{1}{h}
		\begin{bmatrix}
			1 &-1 & 0 &\cdots &0\\
			&1  &-1 &\cdots &0\\
			&   &\ddots &\ddots  &\\
			&   &       &1 &-1
		\end{bmatrix}\in \bbR^{n-1,n},
	\end{equation*}
	where $\BJ_y = \text{diag}\{\BJ_{h}\}$ with 
	$\BJ_h = \sum\limits_{i=1}^{n-1} J_y(x_{i},t)\ket{i}$.
	Then $\tilde H_1$ is obtained by
	$	\tilde H_1 = \begin{bmatrix}
		O &B/2\\
		B/2 &O
	\end{bmatrix}.$
	Obviously, one gets $\lambda(\tilde H_1) = \mathcal{O}(|b|/2)=\mathcal{O}(\max_{x,t}\{J_y\}/2)$.
	\begin{table}[t!]
		\centering
			\caption{
		The convergence rates of  $\frac{\|\Bu_h^d-\Bu\|_{\BL^2(\tilde{\Omega}_p)}}{\|\Bu\|_{\BL^2(\tilde{\Omega}_p)}}$ and    
		 $\frac{\|\Bu_h^c-\Bu\|_{\BL^2(\tilde{\Omega}_p)}}{\|\Bu\|_{\BL^2(\tilde{\Omega}_p)}}$,  with $\gamma = 0.1$ and $\tilde{\Omega}_p=(\frac{1}{2},\frac{3}{2})$.
	}\label{tab:err of recovery dis conti}
		\begin{tabular}{ccccccc}
			\midrule [2pt]
			$(\triangle p,\; \triangle t)$          & $(\frac{\pi}{2^7},\, \frac{1}{2^8})$ & order & $(\frac{\pi}{2^8},\, \frac{1}{2^9})$ & order & $(\frac{\pi}{2^{9}},\, \frac{1}{2^{10}})$ & order
			\\ 
			\hline
			$\frac{\|\Bu_h^d(p)-\Bu(p)\|_{\BL^2(\tilde{\Omega}_p)}}{\|\Bu\|_{\BL^2(\tilde{\Omega}_p)}}$       & 	4.50e-05    &- &1.33e-05  & 1.75 & 3.28e-06 & 2.02  \\
			\midrule [2pt]
			$(X,\;\triangle t)$          & $(80,\,\frac{1}{2^8})$ & order & $(160,\,\frac{1}{2^9})$ & order & $(320,\frac{1}{2^{10}})$ & order\\ 
			\hline
			$\frac{\|\Bu_h^c(p)-\Bu(p)\|_{\BL^2(\tilde{\Omega}_p)}}{\|\Bu\|_{\BL^2(\tilde{\Omega}_p)}}$       & 1.74e-03   &- &	4.67e-04 &1.90  & 1.78e-04 &1.39
			\\
			\midrule [2pt]
		\end{tabular}            
	\end{table}
	
	In this test, we choose $\gamma=0.1$ and the modified smooth initial values \eqref{eq:smooth initial} to  test the convergence rates of the discretization for Schr\"odingerization. 
	For \eqref{eq:shro dis source}, we fix the $p$-domain within $\Omega_p=(-\pi,\pi)$.
	For \eqref{eq:check w source term}, we fix the step size of $\xi$ with $\triangle \xi = \frac{5}{2^3}$ and $\tilde \Omega_p =(\frac{1}{2},\frac{3}{2})$.
	Tab.~\ref{tab:err of recovery dis conti}  shows that the optimal convergence order $\|\Bu_h^d-\Bu\|_{\BL^2(\tilde{\Omega}_p)}\sim \triangle p^2$ and $\|\Bu_h^c-\Bu\|_{\BL^2(\tilde{\Omega}_p)} \sim X^{-1}$ are obtained, respectively.
	Fig.~\ref{fig:convergence rates} shows  $\|\Bw_h^ce^{p}-\Bu\|$ and $\|\Bw_h^de^{p}-\Bu\|$ with respect to $p$.
	It implies that it is better to pick the point $p>p^{\Diamond}$  near $p^{\Diamond}$, to avoid the error 
	being magnified exponentially for large $p$ when using a single point to recover $\Bu$ by $\Bu_h^d = e^p \Bw_h^d$ (or $\Bu_h^c = e^p \Bw_h^c$).
	According to the proof in Theorem~\ref{thm:err of conti}, the residual of $\Bw_h^c-\Bw$ contains $\int e^{i\xi p} d\xi$, leading 
	to the oscillation of $\|\Bw_h^c e^p-\Bu\|$ in terms of  $p$. Therefore, 
	it would be better to apply integration to recover $\Bu$ when using continuous Fourier transformation for Schr\"odingerization.
	
	\begin{figure}[http]
		\includegraphics[width=0.313\linewidth]{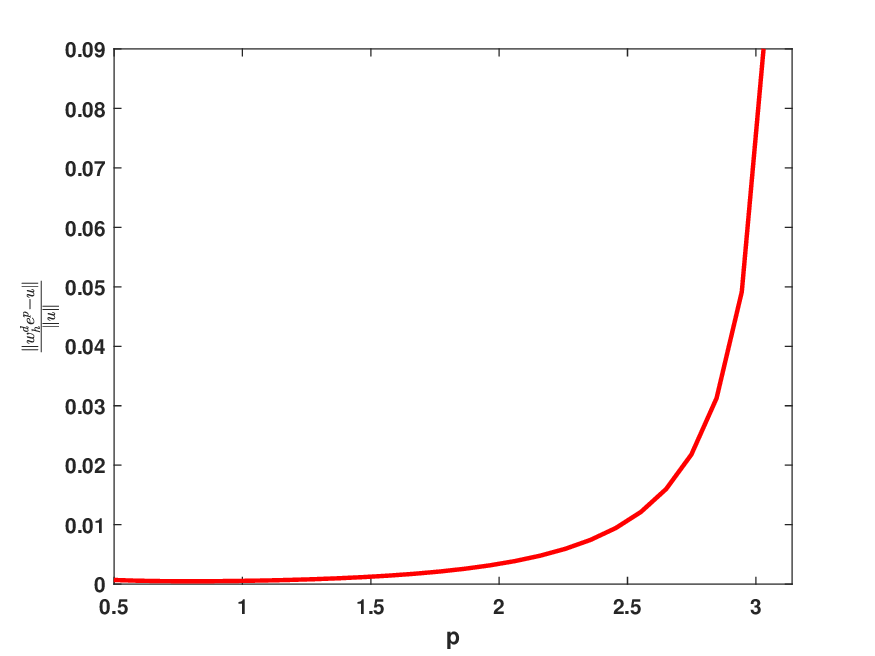}
		\includegraphics[width=0.313\linewidth]{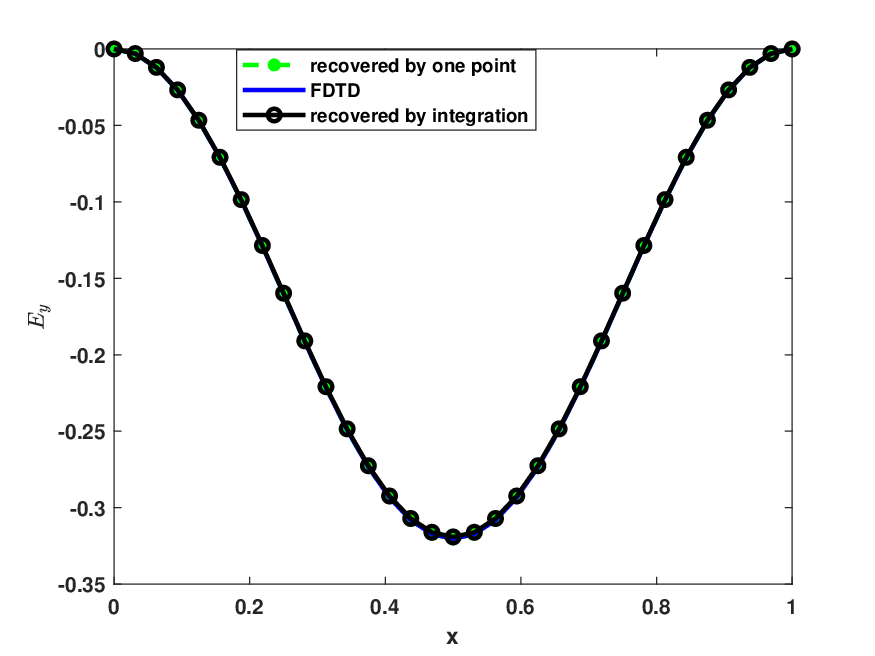}
		\includegraphics[width=0.313\linewidth]{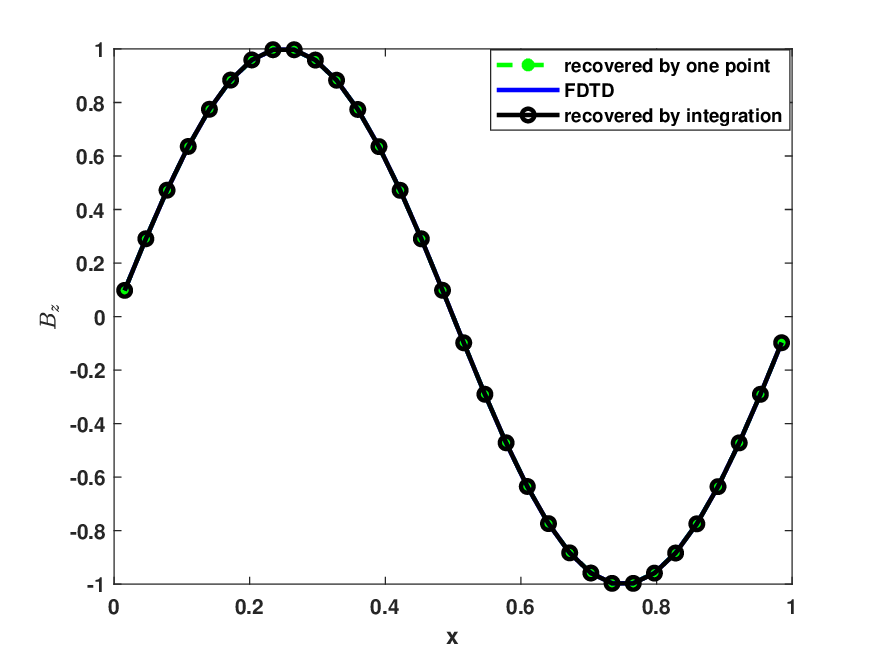}\\
		\includegraphics[width=0.313\linewidth]{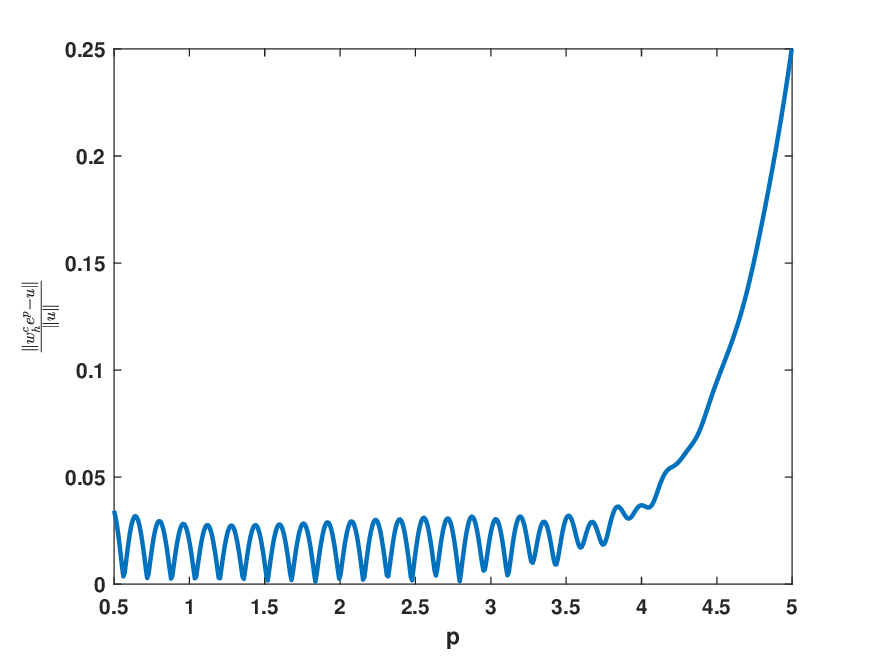}
		\includegraphics[width=0.313\linewidth]{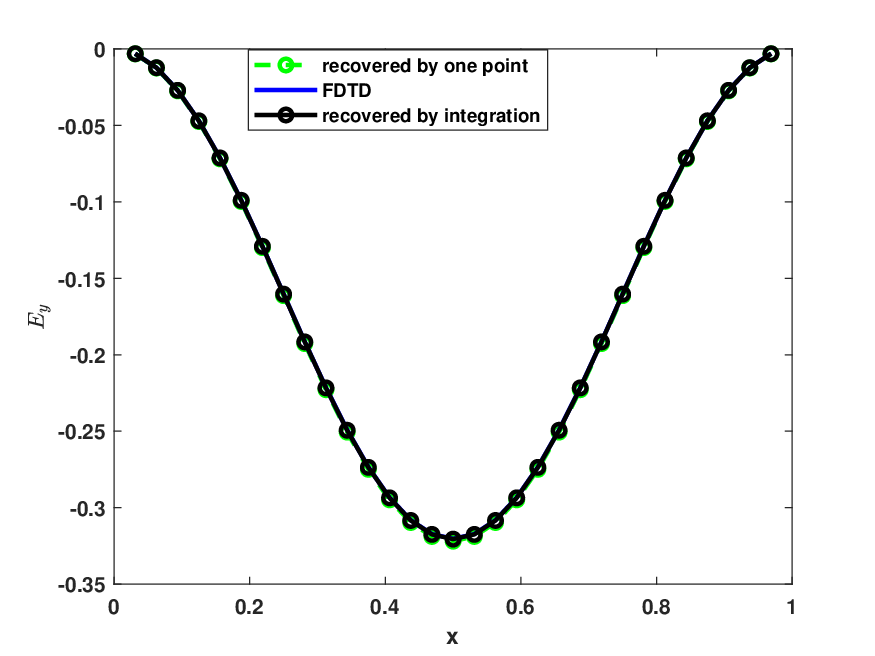}
		\includegraphics[width=0.313\linewidth]{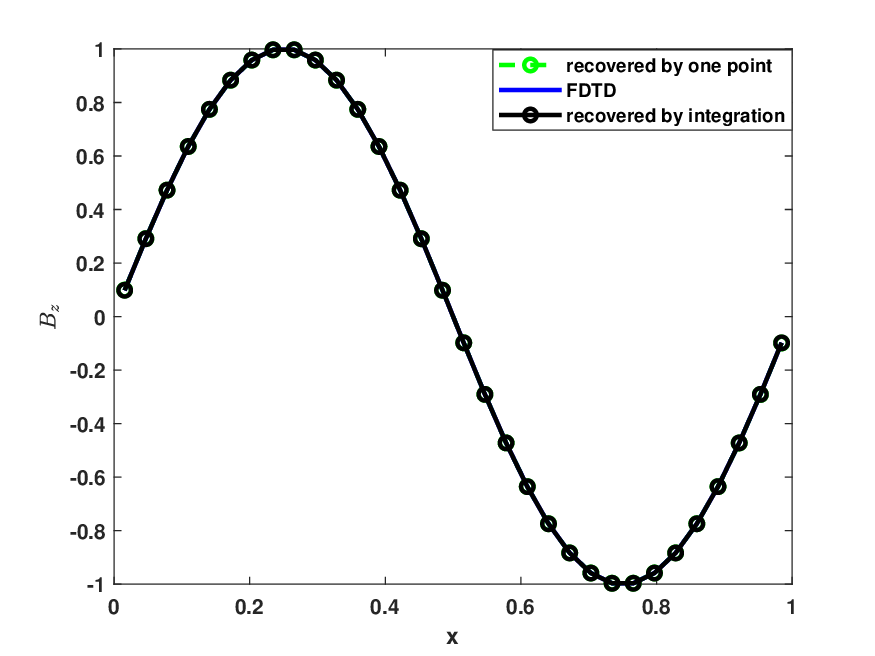}
		\caption{ The first row: the error of discrete Fourier transform defined by $\|\Bw_h^d e^{p}-\Bu\|/\|\Bu\|$ for Schr\"odingerization, with $\triangle p = \frac{\pi}{2^5}$ and $\triangle t = \frac{1}{2^5}$.
			The second row: the error of continuous Fourier transform defined by 
			$\|\Bw_h^c e^p-\Bu\|/\|\Bu\|$ for Schr\"odingerization, with $X = 160$ and $\triangle t = \frac{1}{2^5}$.}
		\label{fig:convergence rates}
	\end{figure}

	\subsubsection{An in-homogeneous system with big source terms}
	\label{subsub2}
	In this test, we fix the initial conditions of \eqref{eq:maxwell tests} and modify the source term  as
	\begin{equation*}
		J_y = -2000 \pi t\cos(2\pi x).
	\end{equation*}
	It is straightforward to observe that $\lambda(\tilde H_1) = \mathcal{O}(10^3)$, making it challenging to recover without employing the stretch transformation.
	By choosing $\gamma=10^{-4}$, the recovery from Schr\"odingerization agrees well with the  exact solution (see Fig.~\ref{fig:approximation of schro e1-4}).
	Comparing Tab.~\ref{tab:err of recovery dis conti} and Tab. \ref{tab:err of recovery dis conti bigSource}, we find that the stretch coefficient
	does not affect the relative error and the convergence order.
	
		\begin{table}[t!]
			\centering
		\caption{
			The convergence rates of  $\frac{\|\Bu_h^d-\Bu\|_{\BL^2(\tilde{\Omega}_p)}}{\|\Bu\|_{\BL^2(\tilde{\Omega}_p)}}$ and    
			$\frac{\|\Bu_h^c-\Bu\|_{\BL^2(\tilde{\Omega}_p)}}{\|\Bu\|_{\BL^2(\tilde{\Omega}_p)}}$,  with $\gamma = \frac{1}{10^4}$ and $\tilde{\Omega}_p=(\frac{1}{2},\frac{3}{2})$.
		}\label{tab:err of recovery dis conti bigSource}
		\begin{tabular}{ccccccc}
			\midrule [2pt]
			$(\triangle p,\; \triangle t)$          & $(\frac{\pi}{2^8},\, \frac{1}{2^7})$ & order & $(\frac{\pi}{2^9},\, \frac{1}{2^8})$ & order & $(\frac{\pi}{2^{9}},\, \frac{1}{2^{10}})$ & order
			\\ 
			\hline
			$\frac{\|\Bu_h^d(p)-\Bu(p)\|_{\BL^2(\tilde{\Omega}_p)}}{\|\Bu\|_{\BL^2(\tilde{\Omega}_p)}}$       & 	4.09e-05    &- &1.21e-05  & 1.75 & 2.99e-06 & 2.02  \\
			\midrule [2pt]
			$(X,\;\triangle t)$          & $(80,\,\frac{1}{2^8})$ & order & $(160,\,\frac{1}{2^9})$ & order & $(320,\frac{1}{2^{10}})$ & order\\ 
			\hline
			$\frac{\|\Bu_h^c(p)-\Bu(p)\|_{\BL^2(\tilde{\Omega}_p)}}{\|\Bu\|_{\BL^2(\tilde{\Omega}_p)}}$       &1.58e-03   &- &	4.14e-04 &1.93  & 1.32e-04 &1.64
			\\
			\midrule [2pt]
		\end{tabular}            
	\end{table}

	\begin{figure}[http]
		\includegraphics[width=0.313\linewidth]{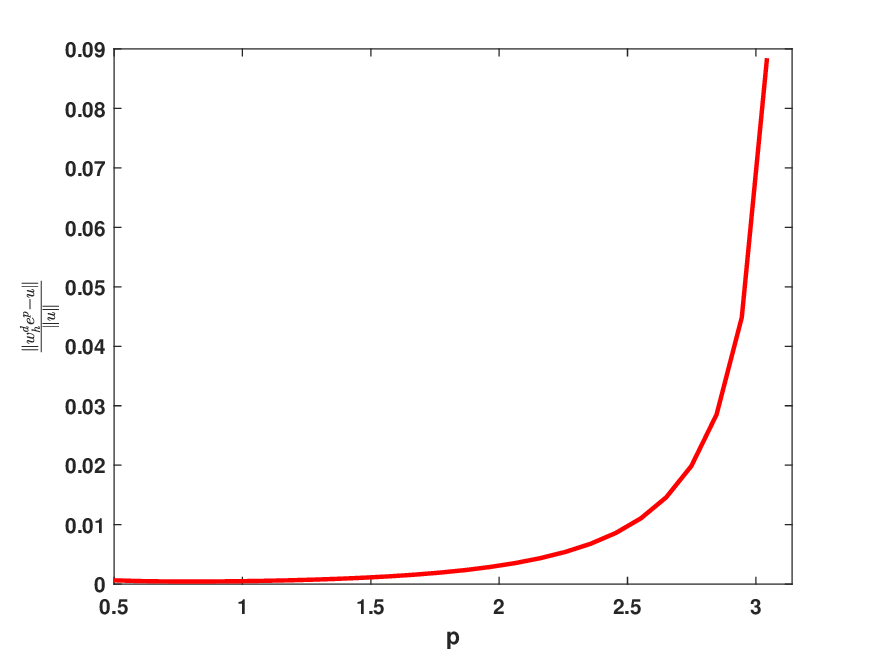}
		\includegraphics[width=0.313\linewidth]{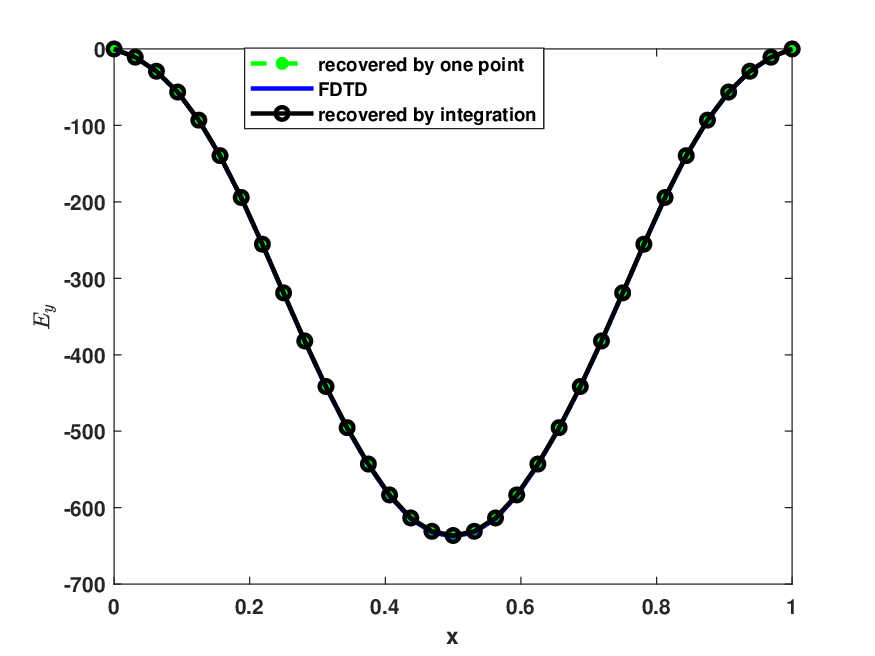}
		\includegraphics[width=0.313\linewidth]{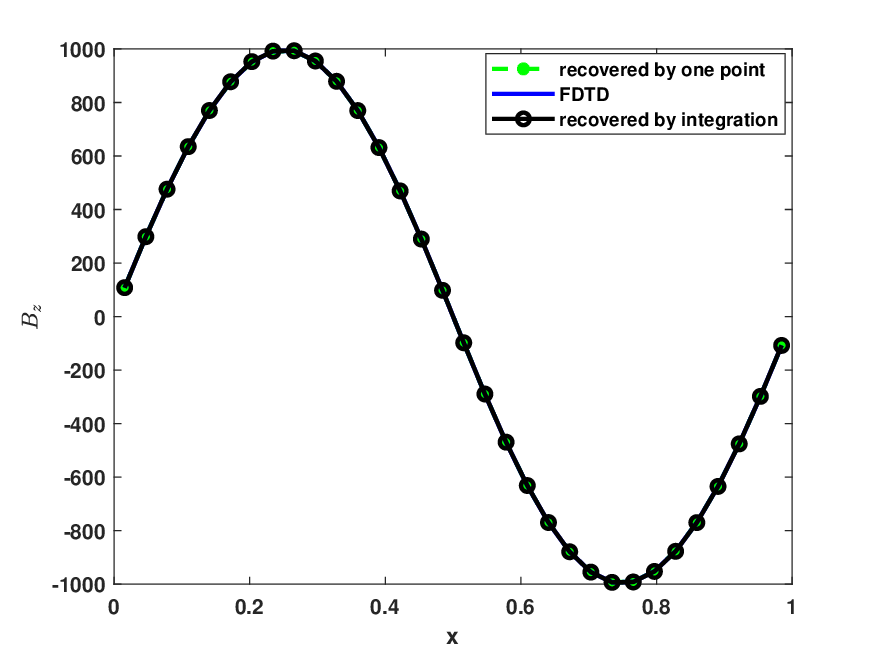}\\
		\includegraphics[width=0.313\linewidth]{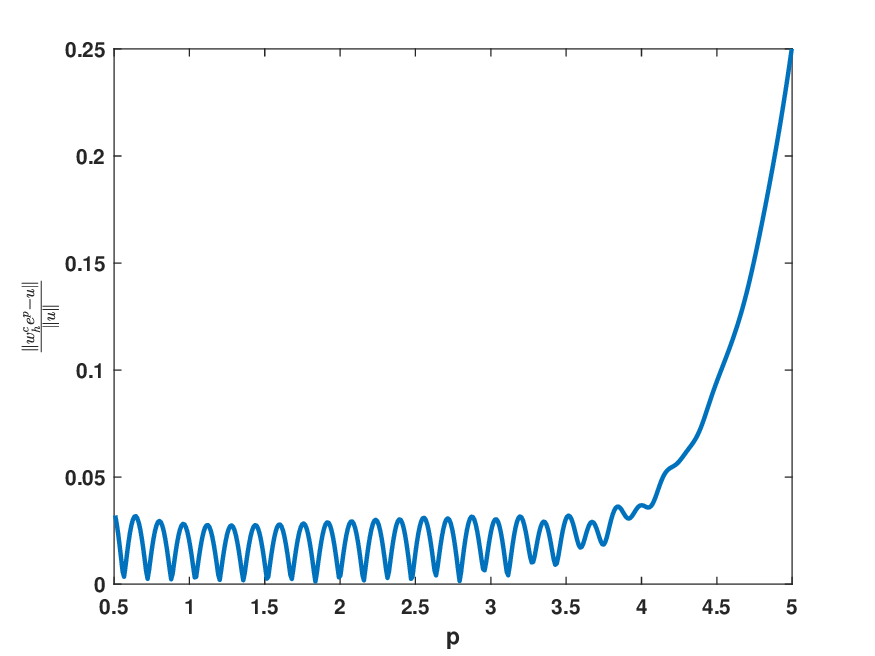}
		\includegraphics[width=0.313\linewidth]{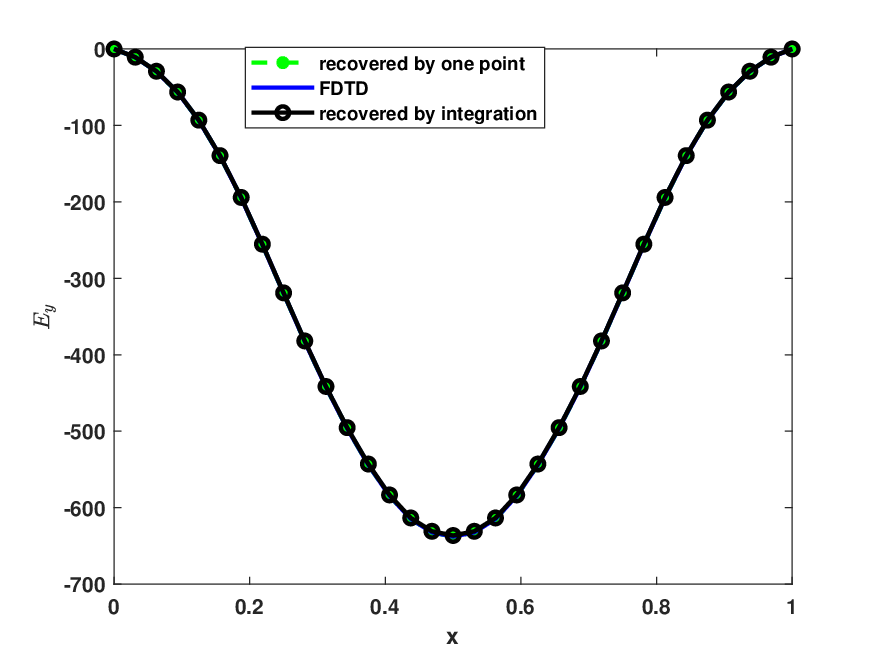}
		\includegraphics[width=0.313\linewidth]{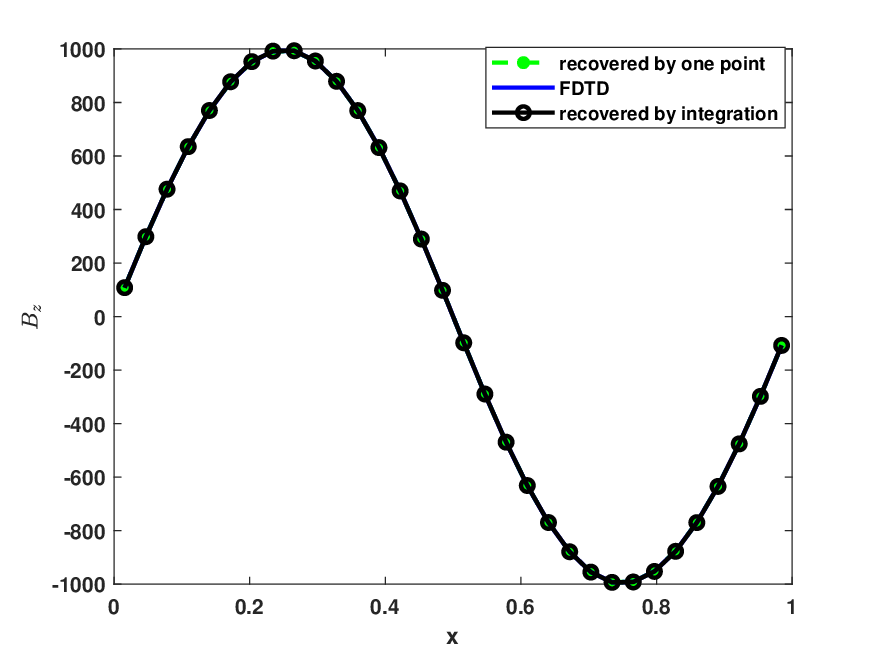}
		\caption{ The first row: the error of discrete Fourier transform defined by $\|\Bw_h^d e^{p}-\Bu\|$ for Schr\"odingerization, with $\triangle p = \frac{\pi}{2^5}$ and $\triangle t = \frac{1}{2^5}$.
			The second row: the error of continuous Fourier transform defined by 
			$\|\Bw_h^c e^p-\Bu\|$ for Schr\"odingerization, with $X = 160$ and $\triangle t = \frac{1}{2^5}$.}
		\label{fig:approximation of schro e1-4}
	\end{figure}
	
	By comparing the results of section \ref{subsub1} and section \ref{subsub2},
	we make the following summary.
	\begin{itemize}
		\item \textbf{Compare two  discretizations of Schr\"odingerization : \eqref{eq:schro_dis} and \eqref{eq:schro_conti_computer}.}
		When $\lambda_{\max}^-(H_1)$ is not particularly large, we refer to \eqref{eq:schro_dis} to discretize the Schr\"odingerized equations. Compared with \eqref{eq:schro_conti_computer}, the discrete Fourier transform 
		for Schr\"odingerization \eqref{eq:schro_dis} has a smaller error and higher-order convergence rates from Tab.~\ref{tab:err of recovery dis conti}.
		However, large $\lambda_{\max}^-(H_1)$ requires a particularly large  $p$ domain for  the discrete Fourier transform and very small $\triangle \xi$ for the continuous Fourier transform \eqref{eq:schro_conti_computer}.
		\item \textbf{Compare two recovery methods: \eqref{eq:recover by one point} and \eqref{eq:recover by quad}.}
		According to Fig.~\ref{fig:convergence rates} and Fig. \ref{fig:approximation of schro e1-4}, the recovery of the primitive variables via a single point (i.e. \eqref{eq:recover by one point}) is suitable for the discrete Fourier transform \eqref{eq:schro_dis},
		and the recovery of the solution through integration (i.e. \eqref{eq:recover by quad}) is suitable for the continuous Fourier transform \eqref{eq:schro_conti_computer}, due to the oscillations of the error with $p$.
	\end{itemize}

	\section{Conclusions}
	
	In this paper, we present some analysis and numerical investigations of the Schr\"odingerization of a general linear system with a source term.
	Conditions under which to recover the original variables are studied theoretically and numerically, for systems that can contain unstable modes.  
	We give the implementation details of the discretization of the Schr\"odingerization and prove the corresponding error estimates and convergence orders. 
	In addition, we homogenize the source term by using the stretch transformation and show that the stretch coefficient does not affect the error estimate of the quantum simulation. 
	
	As can be seen from the analysis, it is difficult to recover the target variables when the evolution matrix has large positive eigenvalues, as in chaotic systems.
	Our method provides a simple and general way to construct stable (including both classical and quantum) computational methods for unstable, ill-posed  problems. This will be the subject of  our further investigation \cite{JLM24SchrBackward}. 
	
	\section*{Acknowledgement}
	
	SJ and NL  are supported by NSFC grant No. 12341104,  the Shanghai Jiao Tong University 2030 Initiative and the Fundamental Research
	Funds for the Central Universities. SJ was also partially supported by the NSFC grants Nos. 12350710181 and 12426637, the Shanghai Municipal Science and Technology Major Project (2021SHZDZX0102).  NL also  acknowledges funding from the Science and Technology Program of Shanghai, China (21JC1402900). 
	CM was partially supported by China Postdoctoral Science Foundation (No. 2023M732248) and Postdoctoral Innovative Talents Support Program (No. BX20230219).


\end{document}